\newtheorem{theorem}{Theorem}[section]
\newtheorem{corollary}[theorem]{Corollary}
\newtheorem{lemma}[theorem]{Lemma}
\newtheorem{proposition}[theorem]{Proposition}
\theoremstyle{definition}
\newtheorem{question}[theorem]{Question}
\newtheorem{definition}[theorem]{Definition}
\newtheorem{example}[theorem]{Example}
\newtheorem{remark}[theorem]{Remark}
\numberwithin{equation}{subsection}
\newtheorem*{ack}{Acknowledgement}
\newcommand{\Alex}{\operatorname{Alex}}
\newcommand{\Maps}{\operatorname{Map}}
\newcommand{\Aut}{\operatorname{Aut}}
\newcommand{\Hol}{\operatorname{Hol}}
\newcommand{\Conj}{\operatorname{Conj}}
\newcommand{\Core}{\operatorname{Core}}
\newcommand{\Inn}{\operatorname{Inn}}
\newcommand{\T}{\operatorname{T}}
\newcommand{\R}{\operatorname{R}}
\newcommand{\id}{\mathrm{id}}
\newcommand{\cs}{\circ*\,}
\newcommand{\cc}{*\circ\,}
\newcommand{\bs}{\,\bar{*}\,}
\newcommand{\bc}{\bar{\,\circ\,}}
\begin{document}

\title{Multiplication of   quandle structures}
\author{Valeriy G. Bardakov}
\author{Denis A. Fedoseev}

\date{\today}
\address{Sobolev Institute of Mathematics and Novosibirsk State University, Novosibirsk 630090, Russia.}
\address{Novosibirsk State Agrarian University, Dobrolyubova street, 160, Novosibirsk, 630039, Russia.}
\address{Regional Scientific and Educational Mathematical Center of Tomsk State University,
36 Lenin Ave., 14, 634050, Tomsk, Russia.}
\email{bardakov@math.nsc.ru}

\address{Moscow state university}
\email{denfedex@yandex.ru}

\subjclass[2010]{Primary 17D99; Secondary 57M27, 16S34, 20N02}
\keywords{Rack, quandle,  quandle structure, multiplication of quandles}

\begin{abstract}
We generalise the construction of $Q$-family of quandles and $G$-family of quandles which were introduced in the paper of A. Ishii, M. Iwakiri,  Y. Jang, K. Oshiro, and find connection with other constructions of quandles.  We define a composition  of quandl's structures, which are defined on the same set and find conditions under which  this composition gives a quandle. Further we prove that under this multiplication we get a group and show that this group is abelian.
\end{abstract}
\maketitle

\bigskip

\section{Introduction}\label{introduction}

A quandle is an algebraic system with a single binary operation that satisfies three axioms encoding the three Reidemeister moves on diagrams of knots and links in the 3-space. 
Quandles were introduced in the fundamental works of Joyce \cite{J} and Matveev \cite{Matveev}, who showed that link quandles are complete invariants of non-split links up to orientation change of the ambient space.
These objects appear in diverse range of areas of mathematics, namely, knot theory \cite{J, Matveev, Nelson}, group theory, set-theoretic solutions to the Yang-Baxter equations and Yetter-Drinfeld Modules \cite{Eisermann2005}, Riemannian symmetric spaces \cite{Loos1}, and Hopf algebras \cite{A}.  
Although link quandles are strong invariants, it is difficult to check whether two quandles are isomorphic. 
\par

During last years quandles have been studied as algebraic system without connection with knot theory. In particular, in \cite{A, BarTimSin2019,  BarSin, E} some constructions of quandles, extensions of quandles, automorphisms of quandles, and representations of quandles are studied. A good description of the algebraic theory of quandles can be found in the book \cite{T}.
A lot of quandles can be constructed from groups. In particular, conjugation quandles, core quandles, generalised Alexander quandles are constructed on a group, and the quandle operation is defined in terms of the original group operation. But it is known that not any quandle comes from a group. In \cite{BN} some construction of quandles, which is a generalization of the construction of conjugation quandle was suggested.
Automorphisms of quandles, which reveal a lot about their internal structures, have been investigated in much detail in a series of papers \cite{Bardakov2017, BarTimSin, Elhamdadi2012}. 
In an attempt to linearise the study of quandles, a theory of quandle rings analogous to the classical theory of group rings was proposed in \cite{Bardakov2019, BPS, Elhamdadi2019}. 

In papers \cite{I1, I2} {\em $Q$-family of quandles} and {\em $G$-family of quandles} were introduced.  A $Q$-family of quandles is a non-empty set $X$ with a set of quandle operations $*_a$, which are indexed by elements of the quandle $Q$. The definition of $G$-family of quandles is similar but instead a quandle $Q$ a group $G$ is used. For $Q$-family of quandles or $G$-family of quandles the {\em associated quandles} $X \times Q$ (resp. $X \times G$) are defined.
Just like quandle axioms were motivated by moves of knot theory, the axioms of a $G$-family of quandles are motivated by handlebody-knot theory,
in which they are used to construct invariants of handlebody-knots \cite{I, I1}. A handlebody-knot is a handlebody embedded in the 3-sphere. A handlebody-knot can be represented by its trivalent spine, and in \cite{I2} a list of local moves connecting diagrams of spatial trivalent graphs which represent equivalent handlebody-knots was given.

In the present paper we find connection between $Q$-family of quandles and quandle extensions from \cite{A}. We also define $(Q, f)$-family of quandles and $(G, f)$-family of quandles which generalise constructions of $Q$-family of quandles and $G$-family of quandles.

A $G$-family of quandles gives a group structure on the set of quandles $(X, *_g)$, $g \in G$, by the rule  $(X, *_g) \circ (X, *_h) = (X, *_{gh})$. We consider more general situation. Suppose that on a set $X$ we have two quandle structures $(X, *_1)$ and  $(X, *_2)$. Then we define their {\em quandle product} $(X, *_1) \circ (X, *_2) = (X, *_1 *_2)$ that is an algebraic system (groupoid) on $X$ with the operation 
$$
x *_1 *_2 y = (x *_1 y) *_2 y,~~x, y \in X,
$$
which is called the {\em composition of quandle operations} $*_1$ and $*_2$.
In general this algebraic system is not a quandle. We prove that if $*_2$ is distributive with respect to $*_1$ (that is, for any $x, y, z \in X$ the equality
$$
(x *_1 y) *_2 z = (x *_2 z) *_1 (y *_2 z)
$$
holds), then $(X, *_1 *_2)$ is a quandle. If $*_2$ is distributive with respect to $*_1$ and vice versa, then we can define an operation $*_1^{n_1} *_2^{m_1} \ldots *_1^{n_k} *_2^{m_k}$, where $n_i$ and $m_i$ are integers, on $X$. Theorem \ref{thm:quandle_mult} says that this operation defines a quandle on $X$. It gives a group structure on this set of quandles. Theorem \ref{gent} generalizes this situation: if we have a set of quandles $(X, *_i)$, $i \in \Lambda$, on a set $X$ and for any pairs $i, j \in \Lambda$, $*_i$ is distributive with respect to $*_j$ and vice versa, then there exists a group that is generated by $(X, *_i)$ under the multiplication $(X, *_i) \circ (X, *_j) = (X, *_i *_j)$. The unit element of this group is the trivial quandle on $X$. We prove (see Corollary \ref{abel}) that this group is abelian.

%The set $\{(Q,\star_w)\,|\,w\in F\}$ has a natural group structure: the operation is the quandle multiplication, unit element is given by the trivial quandle, corresponding to the empty word, and the inverse element for $(Q,\star_w)$ is the element $(Q,\star_{w^{-1}})$. Let us denote this group by $Q_F$. We prove (see Corollary \ref{abel}) that this group is abelian.

If $G$ is a non-abelian group, then we can define two quandles on the set $G$: $\Conj(G)$ and $\Core(G)$. It is interesting to understand under which conditions they generate a group. We prove that if $G$ is a two-step nilpotent group in which the square of any element lies in the center, then $\Conj(G)$ and $\Core(G)$ generate a group that is isomorphic to the abelianization $G^{ab} = G / G'$.

In the end, we consider the following situation. Fix a group $G$ and take two of its automorphisms $\varphi, \psi \in \Aut(G)$. We can define two generalised Alexander quandles on the set $G$. The first one is a quandle $G^*$ with multiplication
$a * b = \varphi(a b^{-1}) b$, $a, b \in G$; the second one is a quandle $G^{\circ}$ with multiplication
$a \circ b = \psi(a b^{-1}) b$, $a, b \in G$. We prove in Proposition \ref{alex} that if $\varphi \psi = \psi \varphi$, then for any integers $k, l$ the groupoid $(G, *^k \circ^l)$ is a quandle.

The paper is organized as follows.
In Section~\ref{sec-prelim} we give the preliminary information on groupoids and quandles.
In Section~\ref{generalizations} we suggest some generalizations of $Q$-family of quandles and $G$-family of quandles.
In Section~\ref{sec:quandle_mult} we define composition of quandle operations and multiplication of quandles which are defined on the same set. In particular, we consider the conjugation quandle and the core quandle on a group $G$ and find in Corollary~\ref{conjcor} condition on $G$ under which these two quandles generate a group.
For a pair of generalised Alexander quandles on $G$ we get a similar result.

In the last section we formulate questions for further research.

\section{Preliminaries on quandles}\label{sec-prelim}

In this section we recall some well-known facts on quandles (see, for example, \cite{A, J}).

By {\it groupoid} $(Q, *)$ we mean a non-empty set  $Q$ with one binary algebraic operation $*\colon Q \times Q \to Q$.
A {\it quandle} is a groupoid  $(Q, *)$ in which the  operation $(x,y) \mapsto x * y$ satisfies the following axioms:
\begin{enumerate}
\item[(Q1)] Idempotency axiom: $x*x=x$ for all $x \in Q$,
\item[(Q2)] Right invertibility axiom: for any $x,y \in Q$ there exists a unique $z \in Q$ such that $x=z*y$,
\item[(Q3)] Self-distributivity axiom: $(x*y)*z=(x*z) * (y*z)$ for all $x,y,z \in Q$.
\end{enumerate}

A groupoid satisfying (Q1)  is called an {\it idempotent groupoid}. A groupoid satisfying (Q2) is called a {\it right quasigroup}.
A groupoid satisfying  (Q2) and (Q3)  is called a {\it rack}.

\begin{remark}
It is not difficult to see that a quandle which is generated by one element contains only one element, but a rack which is generated by one element can contain infinite number of elements.
\end{remark}

It follows from (Q2) that we can define an operation $*^{-1}\colon Q \times Q \to Q$ by the rule
$$
a = c * b \Leftrightarrow c = a *^{-1} b.
$$
This is equivalent to
$$
(a * b) *^{-1} b = a = (a *^{-1} b) * b.
$$
From this identity and (Q3) follow the identities
$$
(a * b) *^{-1} c = (a *^{-1} c) * (b *^{-1} c),
$$
$$
(a *^{-1} b) * c = (a * c) *^{-1} (b * c),
$$
$$
(a *^{-1} b) *^{-1} c = (a *^{-1} c) *^{-1} (b *^{-1} c).
$$

Hence, we can define a rack as a set $Q$ equipped with two binary operations 
$$
(a, b) \mapsto a * b~\mbox{and}~ (a, b) \mapsto a *^{-1} b
$$ 
which satisfy the axioms
\begin{enumerate}
\item[(R1)] $(a * b) *^{-1} b = (a *^{-1} b) * b = a$ for all $a, b, c \in Q$,
\item[(R2)] $(a*b)*c=(a*c) * (b*c)$ for all $a, b, c \in Q$.
\end{enumerate}

Using this observation one can prove

\begin{proposition} \label{NorFor}
Let $Q$ be a rack with a set of generators $A$. Then any element $u \in Q$ can be presented in the form
$$
u = (\ldots ((a_{i_1} *^{\varepsilon_1} a_{i_2}) *^{\varepsilon_2} a_{i_3}) *^{\varepsilon_3} \ldots  ) *^{\varepsilon_{m-1}} a_{i_m},~~~a_{i_j} \in A,~~\varepsilon_k \in \{ -1, 1 \}.
$$
\end{proposition}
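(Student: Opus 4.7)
The plan is to prove the proposition by a two-level induction. Since $A$ generates $Q$ as a rack, every element $u\in Q$ is obtained from $A$ by finitely many applications of $*$ and $*^{-1}$. An outer induction on the number of such applications reduces the statement to the following closure claim: if $v$ and $w$ are already written in the desired normal form, then $v*w$ and $v*^{-1}w$ admit a presentation of the same shape. Generators $a\in A$ are themselves trivially in normal form (with $m=1$), which supplies the base of the outer induction.

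For the closure claim I would run a second induction on the normal-form length of the \emph{right} factor $w$. The base case $w\in A$ is immediate: simply append $*^{\pm 1}w$ to the normal form of $v$. For the inductive step, write $w=w'*^{\eta}c$ with $c\in A$, $\eta\in\{-1,1\}$, and $w'$ in normal form of strictly shorter length. Combining the substitutions $a=(a*^{-1}c)*c$ and $a=(a*c)*^{-1}c$ with (R2) and the three identities displayed just before Proposition~\ref{NorFor}, one derives the four ``push-right'' rules
\begin{align*}
a*(b*c) &= ((a*^{-1}c)*b)*c,\\
a*(b*^{-1}c) &= ((a*c)*b)*^{-1}c,\\
a*^{-1}(b*c) &= ((a*^{-1}c)*^{-1}b)*c,\\
a*^{-1}(b*^{-1}c) &= ((a*c)*^{-1}b)*^{-1}c.
\end{align*}
Applied with $a=v$ and $b=w'$, each of these rewrites $v*^{\delta}w$ in the shape $(v_1*^{\delta}w')*^{\eta}c$, where $v_1=v*^{\pm 1}c$ is already in normal form by the base case. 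Since $w'$ is strictly shorter than $w$, the inductive hypothesis puts $v_1*^{\delta}w'$ in normal form, and appending $*^{\eta}c$ preserves the shape.

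The one delicate point is the choice of induction variable: the push-right manipulations enlarge the left operand by one generator while strictly shortening the right operand, so the induction must run on the length of the right factor, not the left. Once the four push-right identities are in place the rest is routine, and I expect the main obstacle to be merely the careful bookkeeping required to verify those four identities from the three rack relations already derived from (Q2) and (Q3).
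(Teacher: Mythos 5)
Your proof is correct: all four push-right identities follow as you claim from (R2) together with the three mixed identities involving $*$ and $*^{-1}$ displayed just before the proposition, and the double induction (outer on the number of operations building $u$ from $A$, inner on the length of the right-hand factor) is set up so that every rewriting step strictly decreases the relevant measure. The paper in fact omits the proof entirely, saying only that it follows ``using this observation,'' and your argument is a correct and complete realization of exactly the route the authors gesture at.
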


Many interesting examples of quandles come from groups showing the deep connection of quandle theory with group theory.

\begin{itemize}
\item If $G$ is a group and $n$ is an integer, then the set $G$ equipped with the binary operation $a*b= b^{-n} a b^n$ forms a quandle $\Conj_n(G)$. For $n=1$, it is called the {\it conjugation quandle} $\Conj(G)$.
\item If $G$ is a group, then the binary operation $a*b= b a^{-1} b$ turns the set $G$ into the {\it core quandle}  $\Core(G)$.
\item If $\varphi \in \Aut(G)$ is an automorphism of a group $G$, then the set $G$ with the binary operation $a*b=\varphi (ab^{-1})b$ forms a quandle $\Alex_{\varphi } (G)$ called the \textit{generalised  Alexander quandle}. In particular, if $G= \mathbb{Z}/n \mathbb{Z}$ and $\varphi $ is the inversion, then we get the {\it dihedral quandle} $\R_n$.
\end{itemize}
\bigskip

A quandle $Q$ is called {\it trivial} if $x*y=x$ for all $x, y \in Q$.  Unlike groups, a trivial quandle can contain arbitrary number of elements. We denote the $n$-element trivial quandle by $\T_n$.

Note that the axioms (Q2) and (Q3) are equivalent to the map $S_x\colon Q \to Q$ given by $$S_x(y)=y*x$$ being an automorphism of $Q$ for each $x \in Q$. These automorphisms are called {\it inner automorphisms}, and the group generated by all such automorphisms is denoted by $\Inn(X)$. A quandle $X$ is called {\it involutary} if $S_x^2 = \id_Q$ for each $x \in Q$. For example, all  core quandles are involutary.

A subset of a quandle is called a {\it subquandle} if it is a quandle with respect to the underlying binary operation. Subracks are defined analogously. 
%A quandle (rack) $Q$ is called {\it commutative} if $x * y = y * x$ for all $x, y \in Q$, and called {\it latin} if left multiplication by each element of $Q$ is a bijection of $Q$.
\medskip

\section{$Q$- and $G$-family of quandles and their  generalizations} \label{generalizations}

\subsection{$Q$-families of quandles}
The following definition can be found in
 \cite{I}. 

\begin{definition}[\cite{I}]
	Let $(Q, \circ)$ be a quandle. A $Q$-family of quandles is a non-empty set $X$ with a family of binary operations $*_a\colon X \times  X \to X, a \in  Q$,  satisfying the following axioms:
\begin{enumerate}
\item  for any $x \in  X$ and any $a \in Q$, $x *_a x = x$; 

\item for any $x \in X$ and any $a \in Q$, the map $S_{x, a}\colon X \to X$ defined by $S_{x,a}(y) = y*_a x$
is a bijection;

\item for any $x, y, z \in X$ and any $a, b \in Q$,
$$
(x *_{a} y) *_b z = (x *_b z) *_{a \circ b} (y *_b z).
$$
\end{enumerate}
\end{definition}

It follows from this definition that for any $a \in Q$ the set $X$ with the operation $*_a$ is a quandle.

Let $(Q, \circ)$ be a quandle, $(X, \{*_a \}_{a\in Q})$ be a $Q$-family of quandles. It can be routinely checked that binary operation 
$$
* \colon (X \times Q) \times (X \times Q) \to (X \times Q)
$$
defined by the rule
$$
(x, a) * (y, b) = (x *_b y, a \circ b)
$$
gives a quandle structure on the set $X \times Q$.

We construct a generalization of $Q$-family of quandles. In \cite{BarSin} some general construction of quandles was suggested. Let us recall it.

\begin{proposition}[\cite{BarSin}] \label{prop:gen_quandle}
Let $X$ and $S$ be two sets, $g\colon X \times X \to \Maps(S \times S, S)$ and $f\colon S \times S \to \Maps(X \times X, X)$ be two maps. Then the set $X \times S$ with the binary operation 
\begin{equation}\label{genralised-quandle-operation}
(x, s)* (y,t)= \big( f_{s, t}(x, y), ~g_{x, y}(s, t) \big)
\end{equation}
forms a quandle if and only if the following conditions hold:
\begin{enumerate}
\item $f_{s, s}(x, x)=x$ and $g_{x, x}(s, s)=s$ for all $x \in X$, $s \in S$;
\item  for each $(y, t) \in X \times S$, the map $(x, s) \mapsto  \big( f_{s, t}(x, y),~g_{x, y}(s, t) \big)$ is a bijection;
\item for all $x, y, z \in X$ and $s, t, u \in S$ hold 
\item[] $f_{g_{x, y}(s, t), u}\Big(f_{s, t}(x, y),~ z \Big)= f_{g_{x, z}(s, u), g_{y, z}(t, u)}\Big(f_{s, u}(x, z), ~f_{t, u}(y, z) \Big)$ 
\item[] and
\item[] $g_{f_{s, t}(x, y), z}\Big(g_{x, y}(s, t), ~u \Big)= g_{f_{s, u}(x, z), f_{t, u}(y, z)}\Big(g_{x, z}(s, u), ~g_{y, z}(t, u) \Big)$.
\end{enumerate}
\end{proposition}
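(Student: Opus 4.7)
The plan is a straightforward, direct verification: each of the three listed conditions corresponds exactly to one of the quandle axioms (Q1), (Q2), (Q3) applied to the operation \eqref{genralised-quandle-operation} on $X \times S$. So I would treat the proposition as an ``if and only if'' unpacking of the axioms and check each axiom separately, matching it to the appropriate item on the list.

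First I would dispose of (Q1). Expanding the operation gives
$$
(x,s) * (x,s) = \bigl( f_{s,s}(x,x),\ g_{x,x}(s,s)\bigr),
$$
so idempotency in $X \times S$ holds iff $f_{s,s}(x,x)=x$ and $g_{x,x}(s,s)=s$ for all $x \in X$, $s \in S$, which is precisely condition (1). Next, (Q2) asserts that for each fixed $(y,t)$ the right-translation map $(x,s) \mapsto (x,s)*(y,t)$ is a bijection of $X\times S$; but this map is literally the one described in condition (2), so (Q2) and (2) are equivalent by definition.

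The main (though still mechanical) step is (Q3). I would compute the two sides of the self-distributivity identity
$$
\bigl((x,s)*(y,t)\bigr)*(z,u) = \bigl((x,s)*(z,u)\bigr)*\bigl((y,t)*(z,u)\bigr)
$$
by expanding \eqref{genralised-quandle-operation} twice on each side. The left-hand side becomes
$$
\Bigl( f_{g_{x,y}(s,t),\,u}\bigl(f_{s,t}(x,y),\,z\bigr),\ g_{f_{s,t}(x,y),\,z}\bigl(g_{x,y}(s,t),\,u\bigr)\Bigr),
$$
and the right-hand side becomes
$$
\Bigl( f_{g_{x,z}(s,u),\,g_{y,z}(t,u)}\bigl(f_{s,u}(x,z),\,f_{t,u}(y,z)\bigr),\ g_{f_{s,u}(x,z),\,f_{t,u}(y,z)}\bigl(g_{x,z}(s,u),\,g_{y,z}(t,u)\bigr)\Bigr).
$$
Equality of the first coordinates is exactly the first displayed equation in (3), and equality of the second coordinates is the second. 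Since $X \times S$ is a set product, the pair of equalities is equivalent to the single equality of pairs, so (Q3) is equivalent to (3).

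Combining the three equivalences yields the desired ``if and only if''. The only real obstacle is the notational one of keeping subscripts straight during the expansion of both sides of (Q3); I would lay out the computation in aligned form so that the two components can be read off simultaneously, and then point out that the ``if'' and ``only if'' directions are captured in the same computation since each axiom is equivalent, not merely implied, by its respective condition.
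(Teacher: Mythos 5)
Your verification is correct: each of the three conditions is, as you say, exactly the unpacking of (Q1), (Q2), (Q3) for the product operation, and your expansion of both sides of the self-distributivity identity matches the two displayed equations in condition~(3). The paper itself gives no proof (it cites \cite{BarSin}), but this direct componentwise check is the standard argument and there is nothing to add.
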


Suppose that $S$ is a quandle $(Q, *)$ and $g\colon X \times X \to \Maps(Q \times Q, Q)$ is defined by the rule $g_{x, y}(s, t) = s * t$. In this case we get a construction of a quandle from \cite{A}:

\begin{proposition}[\cite{A}] \label{prop:spec_quandle}
Let $X$ be a set, $(Q, *)$ be a quandle,  and  $\mathfrak{f}\colon Q \times Q \to \Maps(X \times X, X)$ be a map. Then the set $X \times Q$ with the binary operation 
\begin{equation}\label{genralised-quandle-operation-1}
(x, s)\cdot (y,t)= \big( \mathfrak{f}_{s, t}(x, y), ~s * t \big)
\end{equation}
forms a quandle if and only if the following conditions hold:
\begin{enumerate}
\item $\mathfrak{f}_{s, s}(x, x)=x$  for all $x \in X$, $s \in Q$;
\item  for each $(y, t) \in X \times Q$, the map $(x, s) \mapsto  \big( \mathfrak{f}_{s, t}(x, y),~s *t \big)$ is a bijection;
\item for all $x, y, z \in X$ and $s, t, u \in S$ holds 
\item[] $\mathfrak{f}_{s * t, u}\Big(\mathfrak{f}_{s, t}(x, y),~ z \Big)= \mathfrak{f}_{s * u, t * u}\Big(\mathfrak{f}_{s, u}(x, z), ~\mathfrak{f}_{t, u}(y, z) \Big)$. 
\end{enumerate}
\end{proposition}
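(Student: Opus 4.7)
The plan is to deduce Proposition \ref{prop:spec_quandle} as a direct specialization of Proposition \ref{prop:gen_quandle}. I would set $S = Q$ and take $g\colon X \times X \to \Maps(Q \times Q, Q)$ to be the constant map whose value is $(s, t) \mapsto s * t$, so that $g_{x,y}(s,t) = s * t$ does not depend on the variables $x, y$. Writing $\mathfrak{f}$ for $f$, the binary operation \eqref{genralised-quandle-operation} on $X \times Q$ then coincides with \eqref{genralised-quandle-operation-1}, so it suffices to verify that under this substitution the three conditions of Proposition \ref{prop:gen_quandle} collapse to the three conditions of Proposition \ref{prop:spec_quandle}.

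For condition (1), the relation $g_{x,x}(s,s) = s * s = s$ is automatic from the idempotency axiom (Q1) of $(Q, *)$, so only the constraint $\mathfrak{f}_{s,s}(x,x) = x$ survives. Condition (2) is literally identical after the substitution, since the second coordinate $s \mapsto s * t$ already does not depend on $x, y$. For condition (3), the first equation of Proposition \ref{prop:gen_quandle} becomes, upon replacing each $g_{\ast, \ast}(\cdot, \cdot)$ by $\cdot * \cdot$, precisely the displayed equation of Proposition \ref{prop:spec_quandle}; meanwhile the second equation collapses to
$$
(s * t) * u = (s * u) * (t * u),
$$
which is automatic from the self-distributivity axiom (Q3) of $Q$.

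The main thing to be careful about is ensuring that no hidden constraint on $\mathfrak{f}$ is lost when the second equation of part (3) of Proposition \ref{prop:gen_quandle} trivializes. I would check that this equation becomes a condition purely on $*$ in $Q$ precisely because $g$ was chosen to be independent of its $X$-arguments, so that the specific values $f_{s,t}(x,y)$, $f_{s,u}(x,z)$, $f_{t,u}(y,z)$ never enter. Once this independence is verified, every axiom of Proposition \ref{prop:gen_quandle} either restricts to the corresponding axiom of Proposition \ref{prop:spec_quandle} or becomes an already-assumed quandle axiom of $(Q, *)$, and the equivalence is complete.
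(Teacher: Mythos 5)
Your proposal is correct and is essentially the paper's own argument: the text immediately preceding Proposition~\ref{prop:spec_quandle} obtains it by specializing Proposition~\ref{prop:gen_quandle} with $S=Q$ and $g_{x,y}(s,t)=s*t$, exactly as you do. Your verification that the trivialized parts of conditions (1) and (3) reduce to the quandle axioms (Q1) and (Q3) of $(Q,*)$, so that no constraint on $\mathfrak{f}$ is lost, correctly fills in the details the paper leaves implicit.
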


Underlying the quandle constructed in this proposition is actually a family of quandles generalizing the $Q$-family defined above.

\begin{definition}
	Let $(Q,*)$ be a quandle. A $(Q ,f)$-family of quandles is a non-empty set $X$ with a family of binary operations $*_a\colon X \times  X \to X, a \in  Q$,  satisfying the following axioms:
\begin{enumerate}
\item  for any $x \in  X$ and any $a \in Q$, $x *_a x = x$; 

\item for any $x \in X$ and any $a \in Q$, the map $S_{x, a}\colon X \to X$ defined by $S_{x,a}(y) = y*_a x$
is a bijection;

\item for any $x, y, z \in X$ and any $g, h, q \in Q$,
$$
(x*_{f(g, h)} y) *_{f(g*h, q)} z = (x*_{f(g, q)} z)  *_{f(g*q, h * q)}  (y *_{f(h, q)} z).
$$
\end{enumerate}
\end{definition}

It is easy to see that if we set $\mathfrak{f}(s,t)=*_{f(s,t)} : X \times X \to X$ for a given $(Q,f)$-family of quandles, then the set $X\times Q$ with the operation 
$$
(x,s)\cdot (y,t)=(x*_{f(s,t)} y, s*t)
$$
 is a quandle which coincides with the quandle defined in Proposition~\ref{prop:spec_quandle}. It is called the {\em associated quandle} of the $(Q,f)$-family of quandles. It is worth noting that the axioms of $(Q,f)$-family of quandles are a bit {\em stronger} than the ones needed for $(X\times Q,\cdot)$ to be a quandle. That is, first two axioms for the family must be satisfied for all $*_a, a\in Q$, while for $X\times Q$ to be a quandle we need those axioms to hold only for $*_{f(s,s)}, s\in Q$ (for the first axiom) and $*_{f(s,t)}, s,t\in Q$ (for the second axiom).

Now let us show that this construction generalizes the $Q$-family of quandles. Indeed, suppose that the map $f\colon Q \times Q \to \Maps(X \times X, X)$ depends only on the second argument, in other words 
$$
f_{s,t}(x, y) = f_{s',t}(x, y) \;\;\; \forall s,s'\in Q.
$$
If we put $f_{s,t}(x, y) = x*_t y$, then from Proposition~\ref{prop:spec_quandle} we get

\begin{corollary}
Let $X$ be a set, 
$(Q, \circ)$ be a quandle, and  $f\colon Q  \to \Maps(X \times X, X)$ a map, $f_{s,t}(x, y) = x*_t y$. Then the set $X \times Q$ with the binary operation 
\begin{equation}\label{cor}
(x, s)\cdot (y,t)= \big( x *_t y, ~s \circ t \big)
\end{equation}
forms a quandle if and only if the following conditions hold:
\begin{enumerate}
\item $x *_t x=x$  for all $x \in X$, $t \in Q$;
\item  for each $(y, t) \in X \times Q$, the map $(x, s) \mapsto  \big( x*_t y,~s \circ t \big)$ is a bijection;
\item for all $x, y, z \in X$ and $t, u \in Q$ holds 
\item[]  $(x*_t y) *_u z =  (x *_u z) *_{t \circ u} (y *_u z)$. 
\end{enumerate}
\end{corollary}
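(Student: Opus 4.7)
The plan is to derive this corollary as a direct specialization of Proposition~\ref{prop:spec_quandle}, observing that the stated hypothesis forces $\mathfrak{f}_{s,t}$ to be independent of $s$, and then matching each of the three conditions from the proposition to the three listed conditions in the corollary.

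First I would rename the quandle operation on $Q$ in Proposition~\ref{prop:spec_quandle} from $*$ to $\circ$ to match the corollary's notation, and set $\mathfrak{f}_{s,t}(x,y) = x *_t y$, so that $\mathfrak{f}_{s,t}$ does not depend on $s$. Then the binary operation on $X \times Q$ from \eqref{genralised-quandle-operation-1} becomes
\[
(x,s) \cdot (y,t) = \big(\mathfrak{f}_{s,t}(x,y),\, s \circ t\big) = \big(x *_t y,\, s \circ t\big),
\]
which is exactly the operation in \eqref{cor}. Hence $(X \times Q, \cdot)$ from the corollary is the same groupoid as $(X \times Q, \cdot)$ from the proposition, so the two ``if and only if'' statements concern the same assertion and I only need to translate each hypothesis.

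Next I would check each of the three conditions of Proposition~\ref{prop:spec_quandle} under this substitution. Condition (1), $\mathfrak{f}_{s,s}(x,x) = x$, becomes $x *_s x = x$ for all $x \in X$, $s \in Q$, which is condition (1) of the corollary (after renaming $s$ to $t$). Condition (2) of the proposition translates verbatim to condition (2) of the corollary. For condition (3), the left-hand side of the proposition's identity becomes
\[
\mathfrak{f}_{s \circ t, u}\big(\mathfrak{f}_{s,t}(x,y), z\big) = \mathfrak{f}_{s \circ t, u}(x *_t y,\, z) = (x *_t y) *_u z,
\]
and the right-hand side becomes
\[
\mathfrak{f}_{s \circ u, t \circ u}\big(\mathfrak{f}_{s,u}(x,z), \mathfrak{f}_{t,u}(y,z)\big) = (x *_u z) *_{t \circ u} (y *_u z);
\]
both sides are independent of $s$ because $\mathfrak{f}$ is, so the identity reduces to condition (3) of the corollary.

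There is no real obstacle here: the only thing to verify is that the $s$-dependence in Proposition~\ref{prop:spec_quandle} genuinely drops out on both sides of the self-distributivity relation, which I would state explicitly so the reader sees that nothing has been lost by specializing $\mathfrak{f}$. Having matched the three conditions in both directions, the corollary follows immediately from the ``if and only if'' of the proposition.
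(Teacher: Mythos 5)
Your proposal is correct and is exactly the route the paper takes: the corollary is obtained from Proposition~\ref{prop:spec_quandle} by specializing $\mathfrak{f}_{s,t}(x,y)=x*_t y$ so that the dependence on $s$ drops out of all three conditions. Your explicit check that both sides of the self-distributivity identity become independent of $s$ is the only point worth spelling out, and you do it correctly.
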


In particular, the $(Q,f)$-family of quandles is a $Q$-family of quandles if $*_{f(s,t)}:=*_t$.

\subsection{$G$-families of quandles}
If the set $Q$ is actually a group, a construction parallel (though not quite analogous) to $Q$-family of quandles may be defined and then generalized similar to the case of $(Q,f)$-family.

\begin{definition}[\cite{I}]
	Let $G$ be a group. A $G$-family of quandles is a non-empty set $X$ with a family of binary operations $*_g\colon X \times  X \to X$, $g \in  G$, satisfying the following axioms:
\begin{enumerate}
\item  for any $x \in  X$ and any $g \in G$, $x *_g x = x$; 

\item for any $x, y \in X$ and any $g, h \in G$,
$$
x *_{gh} y = (x *_g y) *_h y,~~~x *_e y = x;
$$

\item for any $x, y, z \in X$ and any $g, h \in G$,
$$
(x *_{g} y) *_h z = (x *_h z) *_{h^{-1}gh} (y *_h z).
$$
\end{enumerate}
\end{definition}

\begin{example}
	Let $H$ be a group, $G\subseteq \Aut H$ be a subgroup in its automorphisms group. For each $g\in G$ define $*_g\colon H\times H\to H, a*_g b = g(ab^{-1})b$, that is, for each $g$ define the structure of a generalized Alexander quandle on $H$. Then it is easy to check that $(H, \{*_g \}_{g \in G})$ is a $G$-family of quandles.
\end{example}

\begin{remark}
	This construction is in a way similar to the {\em holomorph} $\Hol H$ of $H$ construction. Recall, that a holomorph is the semi-direct product $\Hol H = H \rtimes \Aut H$ with the operation
$$
(x, \varphi)  (y, \psi)=(x^{\psi}  y, \varphi  \psi),~~x, y \in H, \varphi,  \psi \in \Aut H.
$$
If we define the operation $\cdot$ on $\Hol H$ by the rule
$$
(x, \varphi) \cdot  (y, \psi)=(x *_{\psi}  y, \varphi  \psi),~~x *_{\psi}  y = \psi(x y^{-1}) y,~~ x, y \in H, \varphi,  \psi \in \Aut H,
$$
we get a quandle.
\end{remark}

Generalizing the construction of $G$-family of quandles by introducing the map $f\colon G\times G\to \Maps(X \times X, X)$ we get the following

\begin{definition} \label{def:Gf-family}
	A $(G, f)$-{\it family of quandles} is a non-empty set $X$ with a family of binary operations $*_g\colon X \times  X \to X$, $g \in  G$, satisfying the following axioms:
\begin{enumerate}
\item  for any $x \in  X$ and any $g \in G$, $x *_g x = x$; 

\item for any $x, y \in X$ and any $g, h \in G$,
$$
x *_{gh} y = (x *_g y) *_h y,~~~x *_e y = x,
$$
where $gh$ is the product in $G$ and $e$ is the unit element of $G$;

\item for any $x, y, z \in X$ and any $g, h, q \in G$,
$$
 (x*_{f(g, h)} y) *_{f(g*h, q)} z = (x*_{f(g, q)} z)  *_{f(g*q, h * q)}  (y *_{f(h, q)} z).
$$ 
\end{enumerate}
\end{definition}

As in the case of $Q$-families of quandles, the set $X\times G$ is a quandle with the operation 
$$
(x,g)\cdot (y,h)=(x*_{f(g,h)} y, g*h),~~x, y \in X,  g, h \in G.
$$ 
It is also called the {\em associated quandle} of the family.

\begin{lemma} \label{lem:for}
Consider a $(G,f)$-family of quandles. For any $x, y \in X$ and any $g, h, q \in G$, the function $f$ satisfies the condition
$$
x*_{f(g, h)f(g*h,q)} y = x*_{f(g, q) f(g*q,h*q)} y.
$$ 
\end{lemma}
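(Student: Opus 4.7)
The plan is to deduce the identity directly by specializing axiom (3) of Definition~\ref{def:Gf-family} to the case $z = y$ and then exploiting axioms (1) and (2) to collapse the resulting expressions.

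First I would rewrite the claimed left-hand side using axiom (2): since $f(g,h) \cdot f(g*h,q)$ is just a product in $G$, we have
\[
x*_{f(g, h) f(g*h, q)} y = (x *_{f(g,h)} y) *_{f(g*h, q)} y.
\]
Next, I would apply axiom (3) with $z := y$; this yields
\[
(x*_{f(g, h)} y) *_{f(g*h, q)} y = (x*_{f(g, q)} y) *_{f(g*q, h * q)} (y *_{f(h, q)} y).
\]
By the idempotency axiom (1), the term $y *_{f(h,q)} y$ simplifies to $y$, so the right-hand side becomes $(x*_{f(g, q)} y) *_{f(g*q, h * q)} y$. Applying axiom (2) once more in the reverse direction collapses this back to $x*_{f(g, q) f(g*q, h * q)} y$, giving the desired equality.

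There is no real obstacle here: the lemma is essentially a formal consequence of the three axioms, and the key (and only) insight is the choice $z = y$, which is forced by the form of the target identity (the variable $z$ has to disappear, and the only axiom that can eliminate it is idempotency). If anything, one should double-check that the interpretation of the symbol $*$ in the second argument of $f$ inside axiom (3) (i.e., the quandle operation on $G$ coming from conjugation) is consistent with its use in the statement of the lemma, but this is purely a bookkeeping check rather than a mathematical difficulty.
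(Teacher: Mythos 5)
Your argument is correct and is exactly the paper's proof: set $z=y$ in axiom (3), use idempotency to reduce $y*_{f(h,q)}y$ to $y$, and then apply axiom (2) on both sides to rewrite the iterated operations as single operations indexed by products in $G$. Nothing further is needed.
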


\begin{proof}
If we put $z=y$, then the third axioms of Definition \ref{def:Gf-family} has the form 
$$
 (x*_{f(g, h)} y) *_{f(g*h, q)} y = (x*_{f(g, q)} y)  *_{f(g*q, h * q)}  (y *_{f(h, q)} y).
$$
By the first axiom it is equivalent to 
$$
 (x*_{f(g, h)} y) *_{f(g*h, q)} y = (x*_{f(g, q)} y)  *_{f(g*q, h * q)} y.
$$
By the second axiom it is equivalent to 
$$
x*_{f(g, h) f(g*h, q)} y = x*_{f(g, q)f(g*q, h * q)} y.
$$
\end{proof}

\begin{remark}
Lemma~\ref{lem:for} means that the map $*_{f(g,h)}\colon G\times G\to {\rm Map} (X\times X\to X)$ satisfies the {\em 2-cocycle condition}, see, for example, \cite{BarSin}.
\end{remark}

Now let us present several examples of $(G,f)$-families of quandles.

\begin{example}
\label{ex:f-quandles}
1) If $Q_G = \Conj(G)$ is the conjugacy quandle that is a quandle with the operation $g * h = h^{-1} g h$,  $f(g, h) = h$ for all $g, h \in G$, then by Lemma~\ref{lem:for} we have
$$
x*_{hq} y = x*_{q q^{-1}  h q} y  \Leftrightarrow x*_{hq} y = x*_{ h q} y,
$$ 
and the $(G, f)$-family of quandles is the $G$-family of quandles.

2) If $Q_G = \T(G)$ is the trivial quandle that is a quandle with the operation $g * h = g$,  $f(g, h) = h$ for all $g, h \in G$, then by Lemma~\ref{lem:for} we have
$$
x*_{hq} y = x*_{q h} y.
$$ 
Hence, in this case the operations $*_g$ commute. For example, that occurs when $G$ is abelian.

3) If $Q_G = \Core(G)$ is the core quandle that is a quandle with the operation $g * h = h g^{-1} h$,  $f(g, h) = h$ for all $g, h \in G$, then by Lemma~\ref{lem:for} we have
$$
x*_{hq} y = x*_{q q h^{-1}q} y.
$$ 
It is true, for example, if $G$ has exponent 2.
\end{example}

\section{Multiplication of quandles}
\label{sec:quandle_mult}

\subsection{Definition of quandle multiplication}
Consider two quandles $Q_1 = (Q, \circ)$ and $Q_2 = (Q, *)$ defined on a set $Q$. Define the {\em composition} $\cs$ of operations $\circ$ and $*$ in the following way: $$\cs\colon Q\times Q\to Q; \;\;\; a\cs b=(a\circ b)*b.$$ 

A natural question arises: is $(Q,\cs)$ a quandle? In general, the answer is negative. 

\begin{example}
On the 3-element set $\{ 1, 2, 3 \}$ there exist three non-isomorphic quandles with multiplication tables:
$$
\begin{tabular}{|c||c|c|c|}
    \hline
$\T_3$ & 1 & 2 & 3  \\
  \hline \hline
1 & 1 & 1 & 1  \\
2 & 2 & 2 & 2  \\
3 & 3 & 3 & 3  \\
  \hline
\end{tabular},
\, \, \, \, \, \, \, \, \, \,
\begin{tabular}{|c||c|c|c|}
    \hline
  % after \\: \hline or \cline{col1-col2} \cline{col3-col4} ...
$\R_3$ & 1 & 2 & 3  \\
  \hline \hline
1 & 1 & 3 & 2  \\
2 & 3 & 2 & 1  \\
3 & 2 & 1 & 3  \\
  \hline
\end{tabular},
\, \, \, \, \, \, \, \, \, \,
\begin{tabular}{|c||c|c|c|}
    \hline
  % after \\: \hline or \cline{col1-col2} \cline{col3-col4} ...
$J_3$ & 1 & 2 & 3  \\
  \hline \hline
1 & 1 & 1 & 1  \\
2 & 3 & 2 & 2  \\
3 & 2 & 3 & 3  \\
  \hline
\end{tabular}.
$$
Here $\T_3$ is  the trivial quandle, $\R_3$ is the dihedral quandle and $J_3$ is the Joyce quandle. It is easy to see that 
$$
\T_3 \R_3 = \R_3 \T_3 = \R_3,~~~\T_3 J_3 =J_3 \T_3 = J_3,~~~\R_3 \R_3 = J_3 J_3 = \T_3 \T_3 = \T_3.
$$
On the other hand, the groupoids $\R_3 J_3$ and $J_3 \R_3$ have multiplication tables,
$$
\begin{tabular}{|c||c|c|c|}
    \hline
  % after \\: \hline or \cline{col1-col2} \cline{col3-col4} ...
$\R_3 J_3$ & 1 & 2 & 3  \\
  \hline \hline
1 & 1 & 3 & 2  \\
2 & 2 & 2 & 1  \\
3 & 3 & 1 & 3  \\
  \hline
\end{tabular},
\, \, \, \, \, \, \, \, \, \,
\begin{tabular}{|c||c|c|c|}
    \hline
  % after \\: \hline or \cline{col1-col2} \cline{col3-col4} ...
$J_3 \R_3$ & 1 & 2 & 3  \\
  \hline \hline
1 & 1 & 3 & 2  \\
2 & 2 & 2 & 1  \\
3 & 3 & 1 & 3  \\
  \hline
\end{tabular}.
$$
They are not quandles but are idempotent right quasigroups.

\end{example}

In the general case the following holds:

\begin{lemma}\label{lem:cs_quandle} Let $Q$ be a set and let $\circ$ and $*$ be two quandle operations on the set $Q$. Then, 

1) The composition $\cs$ satisfies the idempotency and right invertibility axioms;

2) If the operation $*$ is distributive with respect to the operation $\circ$, i.e.
$$
(a \circ b) * c = (a * c) \circ (b * c),~~~a, b, c \in Q,
$$
then the composition $\cs$ satisfies the self-distributivity axiom.	
\end{lemma}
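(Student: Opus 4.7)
For part (1), both axioms are routine. Idempotency follows by applying (Q1) to each operation: $a \cs a = (a \circ a) * a = a * a = a$. For right invertibility, the equation $z \cs b = a$ unfolds to $(z \circ b) * b = a$; by (Q2) for $*$ there is a unique $w \in Q$ with $w * b = a$, and by (Q2) for $\circ$ there is then a unique $z \in Q$ with $z \circ b = w$, so $z = (a *^{-1} b) \circ^{-1} b$ is the required unique solution.

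For part (2), the plan is to reduce both $(a \cs b) \cs c$ and $(a \cs c) \cs (b \cs c)$ to a common expression by repeated application of the distributivity hypothesis, the self-distributivity of each of $*$ and $\circ$, and idempotency. A preliminary identity is useful to have in hand: specialising the hypothesis at $c=b$ and using $b*b=b$ yields $(a \circ b)*b = (a*b) \circ b$, so the composition admits two equivalent presentations $a \cs b = (a \circ b) * b = (a * b) \circ b$, which may be exchanged freely during the manipulations.

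The left-hand side expands as $(a \cs b) \cs c = (((a \circ b) * b) \circ c) * c$. First I would push the outer $*\,c$ past the outer $\circ\,c$ using the identity $(X \circ c) * c = (X * c) \circ c$ (an instance of the hypothesis together with $c*c=c$); then self-distributivity of $*$ rewrites the inner $((a \circ b) * b) * c$ as $((a \circ b) * c) * (b*c)$; next the hypothesis turns $(a \circ b)*c$ into $(a*c) \circ (b*c)$; and a final use of the hypothesis (applied with second argument $b*c$, using $(b*c)*(b*c)=b*c$) collapses the result to
\[
(a \cs b) \cs c = \bigl(((a*c)*(b*c)) \circ (b*c)\bigr) \circ c.
\]
An analogous sequence of rewrites --- now using the reverse form $(u*c) \circ (v*c) = (u \circ v)*c$ of the hypothesis together with the self-distributivity of $\circ$ --- is applied to $(a \cs c) \cs (b \cs c) = ((a \cs c) \circ (b \cs c)) * (b \cs c)$ to bring the right-hand side to a comparable form, at which point the two expressions can be matched.

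The main obstacle is the bookkeeping: each distributivity is available in only one direction at a time, so the sequence of rewrites must be planned so that the next step remains admissible. I expect an operator reformulation to streamline matters. Writing $S_y(x) = x*y$ and $T_y(x) = x \circ y$, the hypothesis reads $S_c T_b = T_{b*c} S_c$, the self-distributivity of $*$ and of $\circ$ read $S_c S_b = S_{b*c} S_c$ and $T_c T_b = T_{b \circ c} T_c$, and the preliminary identity reads $S_y T_y = T_y S_y$. Since $a \cs y = S_y T_y(a) = T_y S_y(a)$, self-distributivity of $\cs$ becomes the operator identity $S_c T_c S_b T_b = S_{b \cs c} T_{b \cs c} S_c T_c$, which one verifies by normalising both sides using these commutation rules.
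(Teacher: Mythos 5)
Part (1) is fine and is exactly the paper's argument. Part (2), however, contains a genuine gap at the step you describe as ``at which point the two expressions can be matched'': that matching cannot be carried out from the stated hypothesis. Your reduction of the left-hand side is correct and yields $(a\cs b)\cs c=((u*v)\circ v)\circ c$ with $u=a*c$, $v=b*c$; the right-hand side, after substituting $a\cs c=u\circ c$, $b\cs c=v\circ c$ and applying self-distributivity of $\circ$, becomes $((u\circ v)\circ c)*(v\circ c)$, and no further use of $(x\circ y)*z=(x*z)\circ(y*z)$ turns one into the other. Your own operator calculus makes the obstruction explicit: normalising $S_cT_cS_bT_b$ with the rules $S_yT_y=T_yS_y$, $S_cS_b=S_{b*c}S_c$, $S_cT_b=T_{b*c}S_c$ and $T_cT_{b*c}=T_{b\cs c}T_c$ gives $T_{b\cs c}T_cS_{b*c}S_c$, whereas normalising $S_wT_wS_cT_c$ (where $w=b\cs c$) gives $T_wT_{c*w}S_wS_c$; equality would require $T_cS_v=T_{c*w}S_w$ for $v=b*c$ and $w=v\circ c$, and unwinding this (using the hypothesis backwards on the right-hand side) yields $(x*v)\circ c=(x\circ c)*(v\circ c)$ --- an instance of the distributivity of $\circ$ with respect to $*$, which is not assumed.

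The gap cannot be closed, because part (2) is false under the one-sided hypothesis. Take $\circ$ to be the core operation $a\circ b=ba^{-1}b$ and $*$ the conjugation $a*b=b^{-1}ab$ on a group $G$; then $(a\circ b)*c=c^{-1}ba^{-1}bc=(a*c)\circ(b*c)$ holds in every group, so the hypothesis is satisfied, but $a\cs b=(ba^{-1}b)*b=a^{-1}b^2$, and in $G=S_3$ with $x=e$, $y=(1\,2)$, $z=(1\,2\,3)$ one finds $(x\cs y)\cs z=z^2$ while $(x\cs z)\cs(y\cs z)=z^{-2}(yz^2)^2=z\ne z^2$. For comparison, the paper's own proof implicitly uses the same unavailable identity: its second equality rewrites $((a\circ b)*b)\circ c$ as $((a\circ b)\circ c)*(b\circ c)$, which is precisely the unassumed distributivity of $\circ$ over $*$. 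Both your argument and the paper's become correct once the hypothesis is strengthened to mutual distributivity of $*$ and $\circ$, which is what Theorem~\ref{thm:quandle_mult} assumes anyway; so your strategy is adequate for that application, but it does not prove the lemma as stated, and neither does the paper's.
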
 

\begin{proof}
	1) {\bf Idempotency}: $a\cs a = (a\circ a)*a = a*a = a$ since both operations are idempotent.
	
	{\bf Right invertibility}: for the equation $y=x\cs a$ we have: $$y=(x\circ a)*a,$$ therefore, $x=(y\bs a)\bc a$, where $\bs$ and $\bc$ denote the right inverse operations for $*$ and $\circ$, respectively.
	
	2) {\bf Self-distributivity}: we need to prove that for any $a,b,c\in Q$ we have $$(a\cs b)\cs c=(a\cs c)\cs (b\cs c).$$ By definition and taking into account the distributivity of $*$ with respect to $\circ$, we get: $$(a\cs b)\cs c= (((a\circ b)*b)\circ c)*c=(((a\circ b)\circ c)*(b\circ c))*c=$$ $$=(((a\circ b)\circ c)*c)*((b\circ c)*c)=(((a\circ c) \circ (b\circ c))*c)*((b\circ c)*c)=$$ $$=(((a\circ c)*c)\circ ((b\circ c)*c))*((b\circ c)*c)=(a\cs c)\cs (b\cs c).$$
\end{proof}

\begin{remark}
From the proof of this lemma we can see that for the operation $\cs$, the operation $\bs\bc$ is the right inverse.	
\end{remark}

\begin{remark}
	This lemma gives a {\em sufficient} condition for $\cs$ to be a quandle operation. Below we show that this condition is not {\em necessary}.
\end{remark}

This lemma allows one to produce a new quandle structure from two given ones, provided they satisfy the distributivity condition.

\begin{definition} \label{def:quandle_mult}
	Let $Q$ be a set and let $\circ$ and $*$ be two quandle operations on $Q$ which satisfy  the condition 
	$$
	(a\circ b)*c=(a*c)\circ (b*c)
	$$ for any $a,b,c \in Q$. Then {\em multiplication} of quandles $(Q,\circ)$ and $(Q,*)$ is defined by the following formula: 
	\begin{equation}
		(Q,\circ) (Q,*)=(Q,\cs).
	\end{equation}
\end{definition}

As follows from Lemma \ref{lem:cs_quandle}, $(Q,\cs)$ is a quandle. We will denote the groupoid $(Q,\cs)$ obtained by multiplication of $Q_1=(Q,\circ)$ and $Q_2=(Q,*)$ by $Q_1 Q_2$.

\begin{example}
	Let $*=\circ$. Quandle opearation is self-distributive, therefore the operation $*^2=**$ is a quandle operation as well. That is, given a quandle $(Q,*)$ we define its square as the quandle $(Q,*^2)$. In this language an {\em involutive} quandle may be defined as a quandle, whose square is the trivial quandle.
\end{example}

To use the multiplication operation further, we need to understand how powers of quandle operations interact with one another. First, let us introduce the formal definition.

\begin{definition}
	Let $(Q,*)$ be a quandle. For any $n\in \mathbb{N}$ and any $a,b\in Q$ define:
	\begin{itemize}
	\item $a*^0 b = a$;
	\item $a*^n b = (a*^{n-1} b)*b$;
	\item $a*^{-n}b = a \bs^n b$.
	\end{itemize}
\end{definition}

\begin{lemma} \label{lem:powers_distr}
	Let $(Q,\circ)$ and $(Q,*)$ be two quandles, and let $*$ be distributive with respect to $\circ$. Then for any $n,m\in \mathbb{Z}$ the operation $*^m$ is distributive with respect to $\circ^n$.
\end{lemma}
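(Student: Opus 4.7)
The plan is a double induction, with the main subtlety being how to push distributivity through negative exponents. First I would prove the auxiliary fact that $*$ is distributive with respect to $\circ^n$ for every $n\in\mathbb{Z}$; then, with $n$ fixed, I would prove by induction on $|m|$ that $*^m$ is distributive with respect to $\circ^n$ for every $m\in\mathbb{Z}$.

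For the first stage, the case $n=0$ is trivial (both sides equal $a*c$) and $n=1$ is the hypothesis. For $n\ge 2$ the induction step unfolds $a\circ^n b=(a\circ^{n-1}b)\circ b$, applies the given distributivity of $*$ over $\circ$ to peel off the outer $\circ$, and then uses the inductive hypothesis on $\circ^{n-1}$. For negative $n$ the idea is to run the same identity backwards using the right-invertibility of $\circ$: setting $u=a\bc b$ we have $a=u\circ b$, so
\[
a*c=(u\circ b)*c=(u*c)\circ(b*c),
\]
from which $(a\bc b)*c=(a*c)\bc(b*c)$. This gives distributivity of $*$ over $\bc$; iterating the first induction on $n\ge 1$ but now with $\bc$ in place of $\circ$ yields distributivity of $*$ over $\circ^{-n}$ for every $n\ge 1$.

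For the second stage I fix $n\in\mathbb{Z}$ and induct on $m$. The case $m=0$ is immediate since $a*^0 c=a$ makes both sides equal $a\circ^n b$, and $m=1$ is the output of the first stage. For $m\ge 2$, writing $a*^m c=(a*^{m-1}c)*c$ and applying the case $m=1$ followed by the inductive hypothesis gives the step. For negative $m$ I argue as in the first stage: right-invertibility of $*$ lets one set $u=a\bs c$, $v=b\bs c$, so that $(u\circ^n v)*c=(u*c)\circ^n(v*c)=a\circ^n b$ by the $m=1$ case, hence $(a\circ^n b)\bs c=(a\bs c)\circ^n(b\bs c)$, i.e.\ distributivity of $\bs$ over $\circ^n$. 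Iterating this gives the $m<0$ cases.

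The conceptual content is small but one has to be careful about which axiom is invoked where: the positive-exponent induction uses only (Q3)-type hypotheses, whereas extending to negative exponents relies on (Q2) — i.e.\ on the fact that $\bs$ and $\bc$ are well-defined — together with the identities $(a*b)\bs b=a$ and $(a\circ b)\bc b=a$ recalled at the start of Section~\ref{sec-prelim}. The main obstacle, such as it is, is just organizing these cases cleanly so that the two stages do not get entangled; handling the two signs independently, as above, keeps each induction a one-line verification.
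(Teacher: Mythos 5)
Your proof is correct and follows essentially the same route as the paper's: induction on the exponents in the positive cases, and the right-invertibility trick (writing $a=(a\bc b)\circ b$, resp.\ $a=(a\bs c)*c$, applying the already-known distributivity, and cancelling) in the negative ones. The only difference is organizational — you decouple the argument into two sequential single-variable inductions, which subsumes the paper's explicit mixed-sign cases 4)--8) without additional work.
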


\begin{proof}
	We shall prove the claim by going over different values of $m$ and $n$ and using induction where possible.
	
	1) If $n=0$ we have $$(a\circ^0 b)*^m c = a*^m c = (a*^m c)\circ^0 (b*^m c).$$ Likewise, if $m=0$, we have $$(a\circ^n b)*^0 c = a\circ^n b = (a*^0 c) \circ^n (b*^0 c).$$
	
	2) If $m=n=1$, the claim is true from the assumption.
	
	3) For the general subcase $m,n\ge 0$ let us use the induction by $m+n$. Induction basis $m+n=1$ was proved above. Suppose the claim is true for $m+n\le k$. Consider $m+n=k+1$. We shall study the following subcases:
	\begin{itemize}
	\item{$m=1, n>1$.} In that case we have $(a\circ^n b)*c = ((a\circ^{n-1} b)\circ b)*c=((a\circ^{n-1} b)*c)\circ (b*c)= ((a* c)\circ^{n-1}(b*c))\circ (b*c) = (a*c)\circ^n (b*c).$	
	\item{$m>1$.} In this case, $(a\circ^n b)*^mc = ((a\circ^n b)* c)*^{m-1}c=((a*c)\circ^n (b*c))*^{m-1}c=(a*^m c)\circ^n (b*^m c).$
	\end{itemize}
	
	Now we need to move on to the case of negative powers. Essentially, we just need to understand how the operations $\bc$ and $\bs$ behave.
	
	4) If $n=1, m=-1$, we need to check that 
$(a\circ b)\bs c = (a\bs c)\circ (b\bs c)$. We have $a\circ b = ((a\bs c)*c)\circ ((b\bs c)*c) = ((a\bs c)\circ (b\bs c))* c$. By applying $\bs c$ to both sides of the equality, we get the desired distributivity.
	
	5) If $n=-1, m=-1$, let us begin with the equality $x \circ b = a$, which is equivalent to $x = a \bc b$. That gives us $(x\circ b)\bs c = a\bs c$. By case 4) that yields $(x\bs c)\circ(b\bs c)=a\bs c$. Hence $x\bs c = (a\bs c)\bc (b\bs c)$. Finally, by $x = a \bc b$ we have the desired $(a \bc b)\bs c = (a\bs c)\bc (b\bs c)$.
	
	6) Now we can deal with the case $n=-1, m=1$. In the same manner as in case 4) and using case 5) we have $a\bc b = ((a*c)\bs c)\bc ((b*c)\bs c)=((a*c)\bc(b*c))\bs c$, therefore $(a\bc b)*c = (a*c)\bc (b*c)$.
	
	7) If $n,m<0$ we use induction by $|n|+|m|$ repeating case 3) verbatim for operations $\bc$ and $\bs$. Necessary preliminary cases of $|n|,|m|\le 1$ were already proved in 4), 5) and 6).
	
	8) The last case is the mixed one: when $n$ and $m$ have different signs. But cases 4), 5) and 6) together with the assumption of the lemma give us that $*^{\pm 1}$ is distributive with respect to $\circ^{\pm 1}$. Therefore, the same inductive technique applied to suitable operations ($\bc$ and $*$ or $\circ$ and $\bs$, depending on the signs of $n$ and $m$) gives us the desired result. 
\end{proof}

\begin{example} \label{ex:quandle_power}
	Consider a quandle $(Q,*)$. Due to Lemma~\ref{lem:powers_distr} we see that for any $n,m\in\mathbb{Z}$ the powers $*^n$ and $*^m$ are distributive with respect to each other. Therefore, for any $k\in\mathbb{Z}$ the operation $*^k$ is quandle, since it can be obtained as a composition of quandle operations, distributive with respect to one another.
	
	That means that we have correctly defined an {\em integer power of a quandle $(Q,*)$}: it is the quandle $(Q,*^k), k\in\mathbb{Z}$. In particular, 0-th power of any quandle is the trivial quandle.
\end{example}

The situation when $*$ is distributive with respect to $\circ$ leads to an interesting consequence. We get the following chain of equalities:

$$a\cs b=(a\circ b)*b=(a*b)\circ(b*b)=(a*b)\circ b = a\cc b.$$

Hence we proved that $\cs = \cc$. In particular, $\cc$ is also a quandle operation. This fact actually allows us to answer the question: is distributivity $*$ with respect to $\circ$ the {\em necessary} condition for $\cs$ to be a quandle operation? Consider the following example.

\begin{example}
\label{ex:core_and_conj}
	Let $G$ be a non-abelian group, and let $(G,\circ)=Conj(G), (G,*)=Core(G)$. Let us check the distributivities.
	
	To begin with, $$(a*b)\circ c=c^{-1}ba^{-1}bc=c^{-1}bcc^{-1}a^{-1}cc^{-1}bc=(a\circ c)*(b\circ c),$$ hence $\circ$ is distributive with respect to $*$, and the operation $\cc$ is quandle. Therefore, the operation $\cs$ is quandle as well. At the same time,
	
	$$(a*c)\circ(b*c)=c^{-1}bc^{-1}ca^{-1}ccb^{-1}c=c^{-1}ba^{-1}c^2b^{-1}c\neq cb^{-1}a^{-1}bc=(a\circ b)*c.$$
	That shows that $*$ is not distrivutive with respect to $\circ$, and this condition is not necessary for $\cs$ to be a quandle operation.
\end{example}

\subsection{A group of quandles closed under quandle multiplication}
If we begin with operations $\circ$ and $*$ and consider iterative compositions of their powers, we get an infinite family of operations, each of which is defined by a word in the alphabet $\{\circ, *, \bc, \bs\}$. For example, the word $\circ\circ\bs\circ *$ defines the operation $(a,b)\mapsto ((a\circ^2 b)\bs b)\circ b$. We can say that each operation of this family is defined by an element of the free group $F_2=\langle\circ,*\rangle$.

Example~\ref{ex:quandle_power} gives us hope that if we impose some conditions on the operations $\circ$ and $*$, iterative compositions would always give us quandle operations. It is actually the case.

\begin{theorem} \label{thm:quandle_mult}
	Let $(Q,\circ)$ and $(Q,*)$ be quandles and let the operations $\circ$ and $*$ be distributive with respect to each other, i.e.
$$
(a \circ b) * c = (a * c) \circ (b * c),~~~(a * b) \circ c = (a \circ c) * (b \circ c),~~~a, b, c \in Q.
$$
Then any finite word in the alphabet $\{\circ, *, \bc, \bs\}$ defines a quandle operation on the set $Q$.
\end{theorem}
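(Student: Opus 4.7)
The proof proceeds by induction on the length of the word, with Lemma~\ref{lem:cs_quandle} as the step-lemma. For a word $W = \cdot_1 \cdot_2 \cdots \cdot_k$ with each $\cdot_i \in \{\circ, *, \bc, \bs\}$, I define
\[
a \, W \, b \;:=\; (\cdots((a \cdot_1 b) \cdot_2 b) \cdots) \cdot_k b.
\]
Two preparatory facts are needed. First, each of the four atomic symbols is a quandle operation on $Q$: for $\circ$ and $*$ this is the hypothesis, while for $\bc$ and $\bs$ it follows from (R1) together with the self-distributivity analogues for the inverse operations recalled in Section~\ref{sec-prelim} before Proposition~\ref{NorFor}. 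Second, the four atomic operations are pairwise mutually distributive: each of them is $\circ^{\pm 1}$ or $*^{\pm 1}$, so Lemma~\ref{lem:powers_distr}, applied with exponents in $\{\pm 1\}$, gives distributivity in all sixteen combinations.

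I then induct on $k = |W|$. The base case $k=1$ is the first preparatory fact. For the inductive step set $W' := \cdot_1 \cdots \cdot_{k-1}$, so that $W'$ is a quandle operation by the inductive hypothesis and $a \, W \, b = (a \, W' \, b) \cdot_k b$. Thus $W$ is precisely the composition of $W'$ and $\cdot_k$ in the sense of Lemma~\ref{lem:cs_quandle}, and that lemma shows $W$ is a quandle operation as soon as one verifies the distributivity
\[
(a \, W' \, b) \cdot_k c \;=\; (a \cdot_k c) \, W' \, (b \cdot_k c) \qquad \text{for all } a,b,c \in Q.
\]

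This last identity is the only genuine content, and I would establish it by a secondary induction on $|W'|$. The base case $|W'|=1$ is exactly the pairwise distributivity recorded in the second preparatory fact. For the inductive step write $W' = W'' \cdot_{k-1}$ and expand $(a W' b)\cdot_k c = ((a W'' b) \cdot_{k-1} b) \cdot_k c$; pairwise distributivity of $\cdot_k$ over $\cdot_{k-1}$ rewrites this as $((a W'' b) \cdot_k c) \cdot_{k-1} (b \cdot_k c)$, and the secondary inductive hypothesis applied to the inner bracket gives $((a \cdot_k c) \, W'' \, (b \cdot_k c)) \cdot_{k-1} (b \cdot_k c) = (a \cdot_k c) \, W' \, (b \cdot_k c)$, as required. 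The only real obstacle is keeping track of this nested induction; once pairwise mutual distributivity of the four atoms is in hand, no further algebraic identity needs to be verified, and the two inductions combine to yield the theorem.
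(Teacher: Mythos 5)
Your proof is correct, and it takes a genuinely different route from the paper's. The paper first shows (via Lemma~\ref{lem:cs_quandle} and Lemma~\ref{lem:powers_distr}) that every word of syllable length at most two, i.e.\ every operation $\circ^n *^m$, is a quandle operation, then verifies by one direct computation that any two such operations are mutually distributive, and finally doubles: words of syllable length at most $4$ are compositions of mutually distributive length-$2$ blocks, length at most $8$ of length-$4$ blocks, and so on. You instead peel off one letter at a time: a primary induction on $|W|$ reduces everything, via Lemma~\ref{lem:cs_quandle}, to the single identity $(a\,W'\,b)\cdot_k c = (a\cdot_k c)\,W'\,(b\cdot_k c)$, which you establish by a secondary induction on $|W'|$ whose base case is exactly the pairwise distributivity of the four atoms $\circ^{\pm1},*^{\pm1}$. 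Your version is more elementary and arguably tighter: the paper's doubling step is only sketched (``the same reasoning may be applied''), and making it precise requires tracking that \emph{mutual} distributivity, not merely self-distributivity, is inherited by the longer blocks; your nested induction makes that bookkeeping explicit and dispenses with the block structure altogether. The two points you pass over quickly --- that $(Q,\bc)$ and $(Q,\bs)$ are themselves quandles, and that the same-letter cases among the sixteen distributivities follow from Lemma~\ref{lem:powers_distr} applied with $\circ=*$ --- are both immediate from the inverse-operation identities recalled in Section~\ref{sec-prelim}, so the argument is complete.
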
 

\begin{proof}
	By Lemma~\ref{lem:cs_quandle} and Lemma~\ref{lem:powers_distr} we see that for any $n,m\in\mathbb{Z}$ the operation $\circ^n *^m$ is a quandle operation. That means that any word of syllable length not greater than 2 defines a quandle operation on $Q$. Let us consider compositions of such operations. Composition of quandle operations always satisfies the idempotency and right invertibility axioms. Therefore we only need to check the self-distributivity. Hence, we need to check that operations of the form $\circ^n *^m$ are distributive with respect to one another for any $n,m\in\mathbb{Z}$. The direct check gives the following: $$(a\circ^n*^m b)\circ^k*^l c = (((a\circ^n b)*^m b)\circ^k c)*^l c=(((a\circ^n b)\circ^k c)*^m(b\circ^k c))*^l c=$$ $$=(((a\circ^n b)\circ^k c)*^l c)*^m(b\circ^k *^l c)=(((a\circ^k c)\circ^n(b\circ^k c))*^l c)*^m (b\circ^k *^l c)= $$ $$= ((a\circ^k *^l c)\circ^n (b\circ^k *^l c))*^m (b\circ^k*^l c)=(a\circ^k*^l c)\circ^n*^m (b\circ^k*^l c).$$ This sequence of equalities proves the distributivity of $\circ^k*^l$ with respect to $\circ^n*^m$, and therefore all operations defined by the words of syllable length not greater than 4 are self-distributive (and hence are quandle operations).
	
	Note that in this argument we used both distributivity of $*$ with respect to $\circ$ and vice versa. This condition is natural, because setting $n=l=0$ we exactly get that operations of the form $*^m\circ^k$ are quandle, which suggests that $\circ$ must be distributive with respect to $*$. 
	
	Now we need to pass to words of syllable length not greater than 8 by studying the compositions of operations given by shorter words. That may be done technically in the same manner as we did for words of syllable length 4. But there is a less technical way to prove it.
	
	Any operation given by the word $w$ of syllable length 8 or less is a composition of two words $w'_4$ and $w''_4$ of syllable length 4 or less. They, in turn, are composed of two words of syllable length 2 or less each, that is of operations of the form $\circ^n *^m$. We have proved that those operations are distributive with respect to each other. That means that the reasoning used to prove their distributivity may be applied to them (instead of the initial operations $\circ$ and $*$) to prove that length 4 operations are distributive, and hence any length 8 operation is self-distributive. 

Continuing this process inductively, we get the desired result: any operation defined by a finite word in the alphabet $\{\circ, *, \bc, \bs\}$ is self-distributive, and hence is a quandle operation.
\end{proof}

Theorem~\ref{thm:quandle_mult} may be interpreted in the following way. Consider a set $Q$ and two quandle operations on it: $\circ$ and $*$. Let these operations be distributive with respect to each other. Then the closure of the set $\{(Q,\circ), (Q,*)\}$ with multiplication defined in Definition~\ref{def:quandle_mult} is a set of quandles. In other words, if we denote by $\star_w$ an operation defined by a word $w$ from the free group $F_2=\langle\circ, *\rangle$, then the set $\{(Q,\star_w)\,|\,w\in F_2\}$ is closed under quandle multiplication.

Example~\ref{ex:quandle_power} is a natural partial case of this construction when $\circ=*$.

Naturally, this construction may be extended to the case of more operations. To be precise, the following theorem holds.

\begin{theorem} \label{gent}
	Let $Q$ be a set and $*_\alpha$ be a set of quandle operations on $Q$ indexed by some set $\Lambda$. Let the operations $*_\alpha$ be pairwise distributive with respect to each other. Furthermore, let $F$ be the free group generated by $*_\alpha$ for all $\alpha\in \Lambda$. Let $\star_w\colon Q\times Q\to Q$ be an operation defined by the word $w\in F$ as described above. Then the set $\{(Q,\star_w)\,|\,w\in F\}$ is a set of quandles closed under the quandle multiplication.
\end{theorem}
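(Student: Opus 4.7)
The plan is to reduce the statement to Theorem~\ref{thm:quandle_mult} via a finiteness reduction followed by an induction that mirrors the syllable-length argument given there. To begin, observe that any word $w\in F$ is finite, so it involves only finitely many generators $*_{\alpha_1},\ldots,*_{\alpha_n}$. Similarly, checking that $(Q,\star_u)\,(Q,\star_v)=(Q,\star_{uv})$ lies in the prescribed set only concerns the finitely many generators appearing in $u$ and $v$. Hence it suffices to prove the theorem in the case of a finite index set $\Lambda=\{\alpha_1,\ldots,\alpha_n\}$; the infinite case follows by taking unions of such finite subfamilies.

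Next, I would proceed by induction on $n=|\Lambda|$. The base case $n=1$ is exactly Example~\ref{ex:quandle_power}, and $n=2$ is Theorem~\ref{thm:quandle_mult}. For the inductive step, the crucial observation is that the computation carried out in the proof of Theorem~\ref{thm:quandle_mult}, which establishes that $\circ^k *^l$ is distributive with respect to $\circ^n *^m$, uses only the fact that the constituent operations are pairwise distributive, not the fact that there are exactly two of them. Therefore, given generators $*_{\alpha_1},\ldots,*_{\alpha_n}$ that are pairwise distributive, the same chain of equalities shows that any two words $\star_u,\star_v$ of syllable length at most $2$ are mutually distributive, then by doubling that words of syllable length at most $4$ are mutually distributive, and so on.

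Formally, I would set up a secondary induction on the syllable length of $w$. Words of length $\leq 1$ are the generators (together with their inverse operations, whose mutual distributivity is provided by Lemma~\ref{lem:powers_distr}). Assuming all words of syllable length $\leq 2^k$ define quandle operations that are pairwise distributive, any word of syllable length $\leq 2^{k+1}$ factors as a composition $\star_u\star_v$ of two such operations. Idempotency and right invertibility for $\star_u\star_v$ are immediate, while self-distributivity follows from the mutual distributivity of $\star_u$ and $\star_v$ supplied by the inductive hypothesis via the identical computation used in Theorem~\ref{thm:quandle_mult}. Mutual distributivity of compositions of length $\leq 2^{k+1}$ is then verified by the same bookkeeping, closing the induction.

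I expect the main difficulty to be purely notational rather than conceptual: tracking which generator appears at which position of a long alternating composition and carrying the distributivity relations through without error. No new algebraic ingredient beyond the pairwise hypothesis is needed, because the doubling step only ever invokes mutual distributivity of shorter words, which is precisely what the previous stage of the induction produces.
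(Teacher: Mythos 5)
Your proposal is correct and follows essentially the same route the paper intends: the paper gives no separate proof of Theorem~\ref{gent}, asserting only that the syllable-length doubling argument of Theorem~\ref{thm:quandle_mult} extends "naturally," and your key observation --- that the distributivity computation for $\star_u\star_v$ versus $\star_p\star_q$ uses only pairwise distributivity of the four constituent operations, never the fact that there are exactly two generators --- is precisely what makes that extension go through. The finiteness reduction and the explicit induction with "all words of syllable length $\le 2^k$ are pairwise distributive quandle operations" as the invariant correctly supply the details the paper leaves implicit.
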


Moreover, quandle multiplication is associative (even though quandle operations themselves generally are not), and hence the set $\{(Q,\star_w)\,|\,w\in F\}$ has a natural group structure: the operation is the quandle multiplication, unit element is given by the trivial quandle, corresponding to the empty word, and the inverse element for $(Q,\star_w)$ is the element $(Q,\star_{w^{-1}})$. Let us denote this group by $Q_F$.

\begin{remark}
	The group structure is introduced on the family of quandles themselves, not their isomorphism classes. For example, for any quandle $(Q,\circ)$ we have an inverse element which actually is the quandle $(Q,\bar\circ)$. Those two quandles are of course isomorphic. On the other hand, if we multiple the quandle $(Q,\circ)$ by itself, we have no reason to expect the result to be the trivial quandle. Hence we can't say that the group structure respects the isomorphism relation on quandles.
%{\color{red} I do not understand this Remark and suggest to remove it}.

\end{remark}

We can study the group $Q_F$ a bit further. 

\begin{proposition}
Let $F=F_n$ be the free group on $n$ generators. Then the group $Q_F$ has a structure of a $(G,f)$-family of quandles.	
\end{proposition}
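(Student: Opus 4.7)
The plan is to exhibit the $(G,f)$-family structure explicitly by taking $G = F$. With $X = Q$ and the indexed operations $*_w$ of Definition~\ref{def:Gf-family} identified with the operations $\star_w$ furnished by Theorem~\ref{gent}, it remains to specify the auxiliary data demanded by the definition: a quandle structure on the indexing group $G$ (used in the right-hand side of axiom~(3)) and a function $f \colon G \times G \to G$. Following the pattern of Example~\ref{ex:f-quandles}(2), I would take $Q_G$ to be the trivial quandle $\T(F)$, so that $g * h = g$ for all $g, h \in F$, and set $f(g,h) = h$.

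With these choices, axiom~(1) is immediate, since each $\star_w$ is already a quandle by Theorem~\ref{gent} and hence $x \star_w x = x$. Axiom~(2), namely $x \star_{ww'} y = (x \star_w y) \star_{w'} y$ together with $x \star_e y = x$, is essentially built into the very construction of $\star_w$ as the iterated application of the generating operations $*_\alpha^{\pm 1}$ according to the word $w$. The one genuine point to check is well-definedness at the level of elements of $F$ rather than words: one must verify that a free-group reduction $g g^{-1} = e$ matches the quandle cancellation $(a *_\alpha y)\,\bar{*}_\alpha\, y = a$ furnished by axiom~(R1). This is handled by a straightforward induction on the number of such cancellations occurring at the juncture of $w$ and $w'$.

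Finally, for axiom~(3), the triviality of $Q_G$ gives $g * h = g$, $g * q = g$ and $h * q = h$, while the choice $f(g,h) = h$ collapses the axiom to
\[
(x \star_h y) \star_q z = (x \star_q z) \star_h (y \star_q z),
\]
which is exactly the pairwise distributivity of $\star_h$ and $\star_q$ as quandle operations. This was established in the proof of Theorem~\ref{thm:quandle_mult} and extended to an arbitrary generating family in Theorem~\ref{gent}. The only mild obstacle is the bookkeeping step in axiom~(2); once it is dispatched, axiom~(3) falls out at once from the pairwise distributivity already in hand.
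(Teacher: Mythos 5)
Your proof is correct and follows essentially the same route as the paper: take the trivial quandle structure on $F$, set $f(g,h)=h$, and observe that axiom (3) collapses to the pairwise distributivity of the $\star_w$ already guaranteed by Theorem~\ref{gent}. Your extra remark on well-definedness of $\star_w$ under free reduction (matching $g g^{-1}=e$ with the cancellation $(a*_\alpha y)\,\bar{*}_\alpha\, y=a$) is a point the paper leaves implicit, but it does not change the argument.
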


\begin{proof}

Let us consider the case of $n=2$: $F=F_2=\langle\circ, \star \rangle$, that is, the set of operations is generated by two mutually distributive operations $\circ, \star$. The case of arbitrary $n$ is considered in the same manner.

To prove the statement of the proposition, we first need to present the mapping $f$ and the operation $*$. Let $*$ be the trivial quandle operation on $F_2$, and let us consider the mapping $f\colon F\times F \to F$ defined as $f(g,h)=h$ for all $g,h\in F$, that is, $f$ is the projection on the second argument. 
%{\color{red} Is it true that $f : F \times F \to F$, which is the projection on the second component, $f(g,h)=h$?}

Consider the family $(Q; \star_w, w\in F_2; f)$ with the trivial quandle operation on $F_2$. First, we note that $x \star_w x = x$ for all $w\in F_2$ and $x \star_e y = x$ for all $x,y\in Q$ where $e$ is the trivial element of $F_2$ (the empty word). Furthermore, $(x\star_w y)\star_{w'} y = x \star_{ww'} y$ by definition of the operations $\star_w$. Finally, 
$$
(x\star_{f(g,h)} y)\star_{f(g*h,q)}z = (x\star_h y)\star_q z = (x\star_q z)\star_h (y \star_q z) = (x\star_{f(g,q)}z)\star_{f(g*q,h*q)}(y\star_{f(h,q)} z)
$$ due to operations $\star_h, \star_q$ being mutually distributive for all $h,q\in F_2$.

All axioms of a $(G,f)$-family are satisfied, and hence the claim is proved.

\end{proof}

%That means that the group $Q_F$ actually obtains the structure of $(G, f)$-family of quandles (see Definition~\ref{def:Gf-family}). 
%{\color{red} I suggest to formulate this statement as proposition or theorem in the greatest community. It is a good connection with the previous  section. (Is it true that in this case the group $G$ is $G_F$?) Then the next corollary is the consequence of this proposition (theorem).}

This proposition gives us the following corollary. As it was shown in Example~\ref{ex:f-quandles}(2), this family is commutative in the sense that $*_{gh}=*_{hg}$ for all $g,h \in F$. Hence, the group $Q_F$ is commutative, and really is generated by quandles $(Q,*_g)$ where $g\in \mathbb{Z}^2 \cong F/[F,F]$, that is $g=\circ^n *^k, \, n,k\in\mathbb{Z}$.

%Naturally the same is true for any free group $F=F_N$. Hence, the following statement holds:

\begin{corollary} \label{abel}
	The group $Q_F$ of quandles defined above is commutative.
\end{corollary}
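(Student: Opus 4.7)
The plan is to exploit the structural result from the proposition just before the corollary, which realizes $Q_F$ as a $(G,f)$-family of quandles with $G = F$, with the \emph{trivial} quandle operation on $F$, and with $f(g,h) = h$.

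First, I would invoke that proposition directly: the family $(Q; \star_w, w\in F; f)$ equipped with the trivial quandle operation $*$ on $F$ satisfies the axioms of a $(G,f)$-family of quandles. Then I would apply Lemma~\ref{lem:for} to this family. Its conclusion reads
\[
x \star_{f(g, h)\, f(g*h,\,q)}\, y \;=\; x \star_{f(g, q)\, f(g*q,\,h*q)}\, y.
\]
With $f(g,h) = h$ and $g*h = g$ (the trivial operation on $F$), the right-hand subscripts collapse: $f(g,h)\,f(g*h,q) = hq$ and $f(g,q)\,f(g*q,h*q) = qh$. Therefore
\[
x \star_{hq} y \;=\; x \star_{qh} y \qquad \text{for all } x,y\in Q \text{ and } h,q\in F,
\]
exactly as noted in Example~\ref{ex:f-quandles}(2).

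Finally I would translate this equality of binary operations on $Q$ into an equality in the group $Q_F$. Since elements of $Q_F$ are quandle structures $(Q,\star_w)$ on the fixed underlying set $Q$ (not isomorphism classes), two such elements coincide precisely when their binary operations coincide pointwise. The displayed identity above is exactly pointwise equality of $\star_{hq}$ and $\star_{qh}$. Using the definition of quandle multiplication, this yields
\[
(Q,\star_h)\cdot(Q,\star_q) \;=\; (Q,\star_{hq}) \;=\; (Q,\star_{qh}) \;=\; (Q,\star_q)\cdot(Q,\star_h),
\]
so $Q_F$ is abelian.

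The only delicate point is the one just flagged: Lemma~\ref{lem:for} is phrased at the level of elements of $Q$, whereas the corollary is a statement about elements of $Q_F$. The reason the passage is harmless is precisely that the group $Q_F$ is defined on operations (not on isomorphism classes), which is why the remark before the corollary emphasizes this distinction. No further computation is required beyond instantiating $f$ and the trivial operation on $F$ in the identity produced by Lemma~\ref{lem:for}; the rest is bookkeeping.
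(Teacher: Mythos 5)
Your argument is correct and is essentially the paper's own: the paper likewise deduces commutativity by combining the preceding proposition (realizing $Q_F$ as a $(G,f)$-family with the trivial operation on $F$ and $f(g,h)=h$) with the identity $x\star_{hq}y = x\star_{qh}y$ from Example~\ref{ex:f-quandles}(2), which is exactly your instantiation of Lemma~\ref{lem:for}. Your extra remark that equality of operations (rather than isomorphism) is what equality in $Q_F$ means is a worthwhile clarification but not a departure from the paper's route.
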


\subsection{Further examples of quandle multiplication}
Now let us present some other examples of quandle multiplication. 

1. Recall (see \cite{J}) that a quandle $(Q, *)$ is called {\em an $n$-quandle} if for any $a, b \in Q$,
$$
(\ldots ((a * b) * b) * \ldots ) * b = a,
$$
where $b$ appears $n$ times in the left-hand side of the equality. In particular, $1$-quandle is the trivial quandle, $2$-quandle is an involutory quandle. Some examples of $n$-quandles are well-known. For example, any core quandle is involutory, and hence a 2-quandle. Moreover, consider an Alexander quandle $(A, \varphi)$, where $A$ is abelian group, $\varphi$ is its automorphism, and the operation is defined by the rule 
$$
a * b = \varphi(a-b) + b, ~~a, b \in A.
$$ 
If $\varphi$ has order $n$, it is an $n$-quandle. 

In our definition, if $(Q, *)$ is an $n$-quandle, then the set of quandles
$$
\{(Q, *),~~(Q, *^2),~~\ldots,~~(Q, *^n)\}
$$
forms a cyclic group of order $n$ under quandle multiplication and $(Q, *^n)$ is the trivial quandle.

\begin{example}
Let $A = \{ 0, 1, 2, 3, 4 \}$ be the cyclic group of order 5 and $\varphi \in \Aut(A)$, $\varphi(a) = 2a$, $a \in A$. Then $\varphi^4 = id$ and we get 3 non-trivial quandles
$$
\begin{tabular}{|c||c|c|c|c|c|}
    \hline
  % after \\: \hline or \cline{col1-col2} \cline{col3-col4} ...
$*$ & 0 & 1 & 2 & 3 & 4  \\
  \hline \hline
0 & 0 & 4 & 3  & 2 & 1\\
1 & 2 &1 & 0  & 4 & 3\\
2 & 4 & 3 & 2  & 1 & 0\\
3 & 1 & 0 & 4 & 3 & 2  \\
4 & 3 & 2 & 1 & 0 & 4\\
  \hline
\end{tabular},
\, \, \, \, \, \, \, \, \, \,
\begin{tabular}{|c||c|c|c|c|c|}
    \hline
  % after \\: \hline or \cline{col1-col2} \cline{col3-col4} ...
$*^2$ & 0 & 1 & 2 & 3 & 4  \\
  \hline \hline
0 & 0 & 2 & 4  & 1 & 3\\
1 & 4 &1 & 3  & 0 & 2\\
2 & 3 & 0 & 2  & 4 & 1\\
3 & 2 & 4 & 1 & 3 & 0  \\
4 & 1 & 3 & 0 & 2 & 4\\
  \hline
\end{tabular},
\, \, \, \, \, \, \, \, \, \,
\begin{tabular}{|c||c|c|c|c|c|}
    \hline
  % after \\: \hline or \cline{col1-col2} \cline{col3-col4} ...
$*^3$ & 0 & 1 & 2 & 3 & 4  \\
  \hline \hline
0 & 0 & 3 & 1  & 4 & 2\\
1 & 3 &1 & 4  & 2 & 0\\
2 & 1 & 4 & 2  & 0 & 3\\
3 & 4 & 2 & 0 & 3 & 1  \\
4 & 2 & 0 & 3 & 1 & 4\\
  \hline
\end{tabular}.
$$
It is easy to see that $(A, *^2) = \R_5$ is the dihedral quandle.
\end{example}

2. Quandles of order 4. In the paper \cite{HN} one can find 7 non-isomorphic quandles of order 4. They have multiplication tables
$$
\begin{tabular}{|c||c|c|c|c|}
    \hline
  % after \\: \hline or \cline{col1-col2} \cline{col3-col4} ...
$*_0$ & 0 & 1 & 2 & 3   \\
  \hline \hline
0 & 0 & 0 &  0 & 0\\
1 & 1 &1 & 1  & 1 \\
2 & 2 & 2 & 2  & 2 \\
3 & 3 & 3 & 3 & 3   \\
  \hline
\end{tabular},
\, \, \, \, \, \, \, \, \, \,
\begin{tabular}{|c||c|c|c|c|}
    \hline
  % after \\: \hline or \cline{col1-col2} \cline{col3-col4} ...
$*_1$ & 0 & 1 & 2 & 3   \\
  \hline \hline
0 & 0 & 0 & 0  & 0\\
1 & 1 &1 & 1  & 2 \\
2 & 2 & 2 & 2  & 1 \\
3 & 3 & 3 & 3 & 3   \\
  \hline
\end{tabular},
\, \, \, \, \, \, \, \, \, \,
\begin{tabular}{|c||c|c|c|c|}
    \hline
  % after \\: \hline or \cline{col1-col2} \cline{col3-col4} ...
$*_2$ & 0 & 1 & 2 & 3   \\
  \hline \hline
0 & 0 & 0 & 0  & 1\\
1 & 1 &1 & 1  & 2 \\
2 & 2 & 2 & 2  & 0 \\
3 & 3 & 3 & 3 & 3   \\
  \hline
\end{tabular},
\, \, \, \, \, \, \, \, \, \,
\begin{tabular}{|c||c|c|c|c|}
    \hline
  % after \\: \hline or \cline{col1-col2} \cline{col3-col4} ...
$*_3$ & 0 & 1 & 2 & 3   \\
  \hline \hline
0 & 0 & 0 & 1  & 1\\
1 & 1 &1 & 0  & 0 \\
2 & 2 & 2 & 2  & 2 \\
3 & 3 & 3 & 3 & 3   \\
  \hline
\end{tabular},
$$
$$
\begin{tabular}{|c||c|c|c|c|}
    \hline
  % after \\: \hline or \cline{col1-col2} \cline{col3-col4} ...
$*_4$ & 0 & 1 & 2 & 3   \\
  \hline \hline
0 & 0 & 0 & 0  & 0\\
1 & 1 &1 & 3  & 2 \\
2 & 2 & 3 & 2  & 1 \\
3 & 3 & 2 & 1 & 3   \\
  \hline
\end{tabular},
\, \, \, \, \, \, \, \, \, \,
\begin{tabular}{|c||c|c|c|c|}
    \hline
  % after \\: \hline or \cline{col1-col2} \cline{col3-col4} ...
$*_5$ & 0 & 1 & 2 & 3   \\
  \hline \hline
0 & 0 & 0 & 1  & 1\\
1 & 1 &1 & 0  & 0 \\
2 & 3 & 3 & 2  & 2 \\
3 & 2 & 2 & 3 & 3   \\
  \hline
\end{tabular},
\, \, \, \, \, \, \, \, \, \,
\begin{tabular}{|c||c|c|c|c|}
    \hline
  % after \\: \hline or \cline{col1-col2} \cline{col3-col4} ...
$*_6$ & 0 & 1 & 2 & 3   \\
  \hline \hline
0 & 0 & 3 & 1  & 2\\
1 & 2 &1 & 3  & 0 \\
2 & 3 & 0 & 2  & 1 \\
3 & 1 & 2 & 0 & 3   \\
  \hline
\end{tabular}.
$$
We will denote these quandles by $Q_i$, $i = 0, 1, \ldots, 6$, respectively. Then $Q_0$ is the trivial quandle, $Q_5$ is the dihedral quandle $\R_4$. It is easy to check that $Q_1^2 = Q_3^2 = Q_4^2 = Q_5^2 = Q_0$. Further,  $Q_2^3 = Q_6^3 = Q_0$, and since $*_2^2 = \overline{*_2}$, $*_6^2 = \overline{*_6}$, where $\overline{*_i}$
is the inverse operation to $*_i$, then $Q_2^2 \cong Q_2$, $Q_6^2 \cong Q_6$.

It is interesting to understand: for what $i, j \in \{ 1, 2, \ldots, 6\}$, $i \not= j$, the algebraic system $Q_i Q_j$ is a quandle?

\medskip

3. Let us return to the quandles from Example~\ref{ex:core_and_conj}: suppose that $G$ is a non-abelian group and define two quandles: the conjugacy quandle  $\Conj(G) = (G, *)$ with the operation $g * h = h^{-1} g h$
and the core quandle  $\Core(G) = (G, \circ)$ with the operation $g \circ h = h g^{-1}  h$. 

\begin{proposition}
In  $G$ the identity 
\begin{equation} \label{eq1}
(a \circ b) * c = (a  * c) \circ (b * c)
\end{equation}
holds for any $a, b, c \in G$. If $g^2$ lies in the center of $G$ for any $g \in G$, then the identity 
\begin{equation} \label{eq2}
(a * b) \circ c = (a  \circ c)  * (b \circ c)
\end{equation}
holds for any $a, b, c \in G$.
\end{proposition}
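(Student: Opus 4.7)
The plan is to verify both identities by direct computation in $G$, using only the definitions $a*b=b^{-1}ab$ and $a\circ b=ba^{-1}b$, and invoking the hypothesis on squares only where necessary.

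For identity \eqref{eq1}, I would simply expand each side. The left-hand side becomes
$(a\circ b)*c = c^{-1}(ba^{-1}b)c = c^{-1}ba^{-1}bc$,
while the right-hand side is
$(a*c)\circ(b*c) = (c^{-1}bc)(c^{-1}ac)^{-1}(c^{-1}bc) = (c^{-1}bc)(c^{-1}a^{-1}c)(c^{-1}bc) = c^{-1}ba^{-1}bc$,
with the middle $c$'s cancelling. So \eqref{eq1} holds in any group without hypothesis.

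For identity \eqref{eq2} the computation is analogous but leaves a residual factor $c^2$, which is where the centrality hypothesis enters. Expanding the left-hand side gives
$(a*b)\circ c = c(b^{-1}ab)^{-1}c = cb^{-1}a^{-1}bc$,
while the right-hand side becomes
$(a\circ c)*(b\circ c) = (cb^{-1}c)^{-1}(ca^{-1}c)(cb^{-1}c) = c^{-1}bc^{-1}\cdot ca^{-1}c\cdot cb^{-1}c = c^{-1}ba^{-1}c^{2}b^{-1}c$.
Now I would use the hypothesis that $c^2$ lies in the center of $G$ to move it to the left and cancel against $c^{-1}$, yielding $cba^{-1}b^{-1}c$. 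It therefore remains to verify that $cb^{-1}a^{-1}bc = cba^{-1}b^{-1}c$, i.e.\ that $b^{-1}a^{-1}b = ba^{-1}b^{-1}$, which is equivalent to $b^{2}$ commuting with $a^{-1}$. This is a consequence of the same hypothesis applied to the element $b$, so the equality follows.

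The argument is entirely computational; the only non-obvious points are noticing that no assumption is needed for \eqref{eq1} (the two conjugations by $c$ telescope), and seeing that the hypothesis is used twice in \eqref{eq2}—once to absorb the extra $c^2$ on the right-hand side, and once to replace conjugation of $a^{-1}$ by $b$ with conjugation by $b^{-1}$. The main obstacle is purely bookkeeping, namely keeping the noncommutative products straight and identifying the precise role of the centrality assumption in each step.
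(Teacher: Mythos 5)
Your proof is correct and follows essentially the same direct computation as the paper: both sides of \eqref{eq1} reduce to $c^{-1}ba^{-1}bc$ with no hypothesis, and for \eqref{eq2} the two sides expand to $cb^{-1}a^{-1}bc$ and $c^{-1}ba^{-1}c^{2}b^{-1}c$, which are reconciled using centrality of squares. In fact your write-up is slightly more careful than the paper's at the last step: the paper presents the final equality as a consequence of $c^{2}\in Z(G)$ alone, whereas, as you correctly point out, one also needs $b^{2}$ central to convert $ba^{-1}b^{-1}$ into $b^{-1}a^{-1}b$.
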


\begin{proof}
The left side of (\ref{eq1}) has the form
$$
(a \circ b) * c = (b a^{-1} b) * c = c^{-1} b a^{-1} b  c.
$$
The right side of (\ref{eq1}) has the form
$$
(a  * c) \circ (b * c) = (c^{-1} a c) \circ (c^{-1} b  c) = c^{-1} b a^{-1} b  c.
$$
Hence, the identity (\ref{eq1}) holds.

Further, the left side of (\ref{eq2}) has the form
$$
(a * b) \circ c = c b^{-1} a^{-1} b c.
$$
The right side of (\ref{eq2}) has the form
$$
(a  \circ c)  * (b \circ c) = (c a^{-1} c) * (c b^{-1} c) = c^{-1} b a^{-1} c^2 b^{-1} c.
$$
If $c^2 \in Z(G)$, then 
$$
c^{-1} b a^{-1} c^2 b^{-1} c = c b^{-1} a^{-1} b c.
$$
Hence, in this case (\ref{eq2}) holds.
\end{proof}

If  the square of any element of $G$ lies in the centre $Z(G)$, then $G/Z(G)$ has exponent 2, i.e. it is abelian, and $G$ is a two-step nilpotent group. Also, if we denote
$G^* = \Conj(G)$ and $G^{\circ} = \Core(G)$, then $G^{\circ} G^{\circ} = \T(G)$ is the trivial quandle on the set $G$. Also, from the equality 
$$
a ** b = (a*b)*b = b^{-2} a b^2 = a
$$
it follows that
$$
G^* G^*  = \T(G).
$$
Since
$$
a \circ * b = a * \circ b,~~a , b \in G,
$$
we get $G^{\circ} G^* = G^* G^{\circ}$ and $(G^{\circ} G^*)^2 = \T(G)$.

\begin{corollary} \label{conjcor}
If $G$ is a two-step nilpotent group in which the square of any element lies in the center, then $\Conj(G)$ and $\Core(G)$ generate a group that is isomorphic to abelianization
$G^{ab} = G / G'$.
\end{corollary}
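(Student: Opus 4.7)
The plan is to apply the Proposition immediately above together with Theorem~\ref{thm:quandle_mult}, and then execute the explicit group-theoretic calculations already begun in the paragraph preceding the Corollary.

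First, by the preceding Proposition, identity~\eqref{eq1} holds in every group and identity~\eqref{eq2} holds precisely when the square of every element of $G$ lies in $Z(G)$. So the operations $*$ (conjugation) and $\circ$ (core) are mutually distributive, Theorem~\ref{thm:quandle_mult} applies, and we obtain a well-defined group $Q_F$ generated by $G^{*} = \Conj(G)$ and $G^{\circ} = \Core(G)$ under quandle multiplication; by Corollary~\ref{abel} this group is abelian.

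Next I would determine the orders of the generators. The core quandle is always involutory, so $(G^{\circ})^{2} = \T(G)$. Centrality of $b^{2}$ gives $(a*b)*b = b^{-2} a b^{2} = a$, hence $(G^{*})^{2} = \T(G)$. A direct calculation shows $a \cs b = a^{-1}b^{2} = a \cc b$, so $G^{*} G^{\circ} = G^{\circ} G^{*}$; iterating and using centrality of squares once more yields $(G^{*}G^{\circ})^{2} = \T(G)$. Hence $Q_F$ is a quotient of $\mathbb{Z}/2 \times \mathbb{Z}/2$, generated by two commuting involutions.

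The remaining and most delicate step is the identification of $Q_{F}$ with $G^{ab} = G/G'$. To this end I would construct a natural surjection $\Phi \colon G^{ab} \to Q_{F}$ whose definition exploits the hypothesis: $G/Z(G)$ is elementary abelian and $G' \subseteq Z(G)$, so one has enough control over cosets to attach a quandle operation built from $*$ and $\circ$ to each class $gG'$. Injectivity would then follow by exhibiting representatives from distinct $G'$-cosets that realise distinct quandle operations on $G$, while surjectivity follows from the previous paragraph. The main obstacle is exactly this identification, since the calculation above only bounds $|Q_F| \le 4$; producing the correct isomorphism with the full abelianization, and reconciling it with the structural constraints that the central-square hypothesis imposes on $G^{ab}$, is where the essential work of the proof lies.
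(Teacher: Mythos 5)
Your computations are exactly the ones the paper relies on: mutual distributivity from the preceding Proposition, $(G^{\circ})^{2}=(G^{*})^{2}=\T(G)$, the identity $a\cs b=a^{-1}b^{2}=a\cc b$ giving $G^{*}G^{\circ}=G^{\circ}G^{*}$, and $(G^{*}G^{\circ})^{2}=\T(G)$ from centrality of squares. Up to that point you match the paper, which in fact offers nothing beyond these computations: the Corollary is stated with no further argument.

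The step you flag as ``the essential work'', however, is not merely delicate --- it cannot be carried out in the stated generality, and your own bound $|Q_{F}|\le 4$ is the reason. The group generated by $\Conj(G)$ and $\Core(G)$ is generated by two commuting involutions, hence is a quotient of $(\mathbb{Z}/2)^{2}$ (and equals $(\mathbb{Z}/2)^{2}$ whenever $G$ is non-abelian, since $*$, $\circ$ and $\cs$ are then non-trivial and pairwise distinct). But $G^{ab}$ need not have order $4$ under the hypotheses: for $G=Q_{8}\times Q_{8}$, or for an extraspecial $2$-group of order $32$, every square is central yet $G^{ab}\cong(\mathbb{Z}/2)^{4}$. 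So the map $\Phi\colon G^{ab}\to Q_{F}$ you propose can be surjective but never injective in such cases, and no choice of ``representatives realising distinct quandle operations'' will rescue it. The defensible conclusion of your (and the paper's) computation is that the generated group is the Klein four-group, which coincides with $G^{ab}$ only when $G^{ab}\cong(\mathbb{Z}/2)^{2}$, e.g.\ for $Q_{8}$ or $D_{4}$. In short, your proof is incomplete precisely where the statement itself fails; the identification you could not supply is not a missing idea on your part but a defect of the claim.
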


\medskip

4. Let us consider generalized Alexander quandles and find conditions under which the composition of operations defines the multiplication of quandles.
Suppose that $G$ is a group, $\varphi, \psi \in \Aut(G)$. We can define two quandles on $G$. The first one is a quandle $G^*$ with multiplication
$a * b = \varphi(a b^{-1}) b$, $a, b \in G$; the second one is a quandle $G^{\circ}$ with multiplication
$a \circ b = \psi(a b^{-1}) b$, $a, b \in G$.

\begin{proposition} \label{alex}
If $\varphi \psi = \psi \varphi$, then for any integers $k, l$ the groupoid $(G, *^k \circ^l)$ is a quandle.
\end{proposition}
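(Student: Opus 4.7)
My plan is to reduce the proposition to Theorem \ref{thm:quandle_mult}. By that theorem, if the two quandle operations $*$ and $\circ$ are mutually distributive on $G$, then every finite word in the alphabet $\{*,\circ,\bar{*},\bar{\circ}\}$ — and in particular $*^{k}\circ^{l}$ for arbitrary $k,l\in\mathbb{Z}$ (reading the negative exponents as iterated applications of the inverse operations) — defines a quandle operation on $G$. So the proposition will follow as soon as the two distributivity identities
$$
(a\circ b)*c \,=\, (a*c)\circ(b*c) \qquad \text{and} \qquad (a*b)\circ c \,=\, (a\circ c)*(b\circ c)
$$
are verified for all $a,b,c\in G$, under the hypothesis $\varphi\psi=\psi\varphi$.

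To check the first identity I would unfold both sides using the definitions $a*b=\varphi(ab^{-1})b$ and $a\circ b=\psi(ab^{-1})b$, together with the fact that $\varphi$ is a homomorphism. The left-hand side rewrites as $\varphi(\psi(ab^{-1}))\cdot\varphi(bc^{-1})\cdot c$. For the right-hand side, one first observes that $(b*c)^{-1}=c^{-1}\varphi(bc^{-1})^{-1}$, so the product $(a*c)(b*c)^{-1}$ telescopes — the factors of $c$ cancel — to $\varphi(ab^{-1})$, and applying the outer $\psi$ followed by multiplication by $(b*c)$ yields $\psi(\varphi(ab^{-1}))\cdot\varphi(bc^{-1})\cdot c$. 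The two expressions coincide precisely because $\varphi$ and $\psi$ commute. The second identity is verified by the mirror-image computation with the roles of $\varphi$ and $\psi$ swapped; again the commutativity hypothesis is exactly what makes the two sides agree.

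Once mutual distributivity is established, the conclusion is immediate from Theorem \ref{thm:quandle_mult} applied to the word $*^{k}\circ^{l}$. The only substantive piece of work is the distributivity verification, and the main — in fact only — pitfall is the bookkeeping needed to invert an element of the form $\varphi(ab^{-1})b$ and then simplify using that $\varphi$ and $\psi$ are homomorphisms. This is routine algebra, and the role of the commutation hypothesis $\varphi\psi=\psi\varphi$ enters at precisely the point where the two homomorphisms are composed on $ab^{-1}$, which makes clear why the hypothesis is indispensable.
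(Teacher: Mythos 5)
Your proposal is correct and follows essentially the same route as the paper: both verify the two mutual distributivity identities by unfolding the generalised Alexander operations (where the commutation $\varphi\psi=\psi\varphi$ is exactly what is needed) and then invoke the multiplication theorem. The only cosmetic difference is that the paper cites Theorem~\ref{gent} rather than Theorem~\ref{thm:quandle_mult}, which makes no difference for two operations.
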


\begin{proof}
Let us find conditions under which the identity
$$
(a * b) \circ c = (a  \circ c)  * (b \circ c)
$$
holds for all $a, b, c \in G$. This identity is equivalent to
$$
\psi(\varphi(a b^{-1}) b c^{-1}) = \varphi(\psi(a b^{-1}) \psi(b c^{-1})). 
$$
Since $\psi$ is an automorphism, then $\psi(\varphi(a b^{-1}))  = \varphi(\psi(a b^{-1}))$, i.e. $\psi$ and $\varphi$ are permutable. 

By analogy, the identity 
$$
(a \circ b) * c = (a  * c)  \circ (b * c)
$$
holds for all $a, b, c \in G$ if $\varphi \psi = \psi \varphi$. The claim now follows from Theorem \ref{gent}.
\end{proof}

\bigskip

\section{Questions for further research} \label{ques}

\begin{question}
Let $\mathcal{Q}_n$ be the set of all $n$-element quandles. What maximal subgroups can be defined on this set?  
\end{question}

Note that there exist 3 non-isomorphic 3-element quandles, 7 non-isomorphic 4-element quandles, 22 non-isomorphic 5-element quandles, 73 non-isomorphic 6-element quandles, 298 non-isomorphic 7-element quandles. As we have seen in the present paper, on $\mathcal{Q}_3$ one can construct two maximal subgroups which are cyclic groups of order two.

\begin{question}
 Is there a $(Q,  f)$-family of quandles $(X, \{*_g\}_{g \in Q})$ such that any  $n$-element quandle is isomorphic to $(X, *_g)$ for some $g \in Q$? 
\end{question}

As in the case of groups we can define the rank of a quandle  $X$ as a minimal number elements of $X$ which generate $X$. The rank of $X$ is denoted by $rk(X)$.
It is easy to see that $rk(\Core(\mathbb{Z}_n)) = rk(\Core(\mathbb{Z})) = 2$ for $n > 1$.
We can formulate

\begin{question}
What is the connection between the rank of a group $G$ and the rank of $\Core(G)$? Is it true that $rk(\Core(G)) = rk(G) + 1$?
\end{question}

\begin{question}
	As we have shown in Example~\ref{ex:core_and_conj}, the distributivity is not a necessary condition for $\cs$ to be a quandle operation. How one can describe the actual necessary conditions on $\circ$ and $*$ and criteria for their composition to give a quandle operation?
\end{question}

\begin{question}
	In Corollary~\ref{conjcor} we have constructed a group of quandles, and hence a $(G,f)$-family of quandles. Is it possible to describe the associated quandle defined by this family?
\end{question}

\begin{ack}
This work was supported by the Ministry of Science and Higher Education of Russia (agreement No. 075-02-2021-1392). Also, the authors thank Sergey Shpectorov  for useful discussion.
\end{ack}

\medskip


\begin{thebibliography}{HD}
\bibitem{A}  N. Andruskiewitsch and M. Grana, \textit{From racks to pointed Hopf algebras}, Adv. Math.,  178, no. 2  (2003), 177--243.

\bibitem{Bardakov2017} V. G. Bardakov, P. Dey and M. Singh, \textit{Automorphism groups of quandles arising from groups},  Monatsh. Math. 184 (2017), 519--530.

\bibitem{BarTimSin} V. G. Bardakov, T. Nasybullov and M. Singh, \textit{Automorphism groups of quandles and related groups}, Monatsh. Math. 189,  no. 1 (2019), 1--21. 

\bibitem{BarTimSin2019} V. G. Bardakov, T. Nasybullov and M. Singh, \textit{General constructions of biquandles and their symmetries}, 	arXiv:1908.08301.

\bibitem{BN} V. G. Bardakov, T. Nasybullov, \textit{Embeddings of quandles into groups},
J.  Algebra and Its Applications, (2020) 2050136 (20 pages).


\bibitem{Bardakov2019} V. G. Bardakov, I. B. S. Passi and M. Singh, \textit{Quandle rings}, J. Algebra and Appl. 18, no. 8  (2019)  1950157, 23 pp. 

\bibitem{BPS} V. G. Bardakov, I. B. S. Passi and M. Singh, \textit{Zero-divisors and idempotents in quandle rings},
arXiv:2001.06843.

\bibitem{BarSin} V. Bardakov, M. Singh, \textit{Quandle cohomology, extensions and automorphisms}, J. Algebra, 585 (2021), 558--591.

\bibitem{E} M. Elhamdadi, E. M. Moutuou, \textit{Finitely stable racks and rack representations}, arXiv:1611.04453.

\bibitem{Eisermann2005} M. Eisermann, \textit{Yang-Baxter deformations of quandles and racks}, Algebr. Geom. Topol. 5 (2005), 537--562.

\bibitem{Elhamdadi2012} M. Elhamdadi, J. Macquarrie and R.  Restrepo, \textit{Automorphism groups of quandles}, J. Algebra Appl. 11 (2012), 1250008, 9 pp.

\bibitem{Elhamdadi2019}
M. Elhamdadi, N. Fernando and B. Tsvelikhovskiy, \textit{Ring theoretic aspects of quandles}, J. Algebra 526 (2019), 166--187.

\bibitem{I2} A. Ishii, \textit{Moves and invariants for knotted handlebodies}, Algebr. Geom. Topol. 8 (2008), 1403--1418.

\bibitem{I} A. Ishii, M. Iwakiri,  Y. Jang, K. Oshiro, \textit{A $G$-family of quandles and handlebody-knots}, Illinois Journal of Mathematics,  57, no. 3 (2013), 817--838.

\bibitem{I1} A. Ishii, \textit{A multiple conjugation quandle and handlebody-knots}, Topology and its Applications, 2015,  196, Part B,   492--500.

\bibitem{J} D. Joyce,  \textit{A classifying invariant of knots, the knot quandle}, J. Pure Appl. Algebra, 23 (1982), 37--65.

\bibitem{HN} B. Ho, S. Nelson, \textit{Matrices and finite quandles}, Homology, Homotopy and Applications, 7, no. 1 (2005), 197--208.

\bibitem{Loos1} O. Loos, \textit{Reflexion spaces and homogeneous symmetric spaces}, Bull. Amer. Math. Soc. 73 (1967), 250--253.

\bibitem{Matveev} S. Matveev, \textit{Distributive groupoids in knot theory}, (Russian) Mat. Sb. (N.S.) 119 (161), 78--88, 160 (1982).

\bibitem{Nelson} S. Nelson, \textit{The combinatorial revolution in knot theory}, Notices Amer. Math. Soc. 58, no. 1 (2011), 1553--1561.

\bibitem{T} T. Nosaka, \textit{Quandles and Topological Pairs. Symmetry, Knots, and Cohomology}.  SpringerBriefs in Mathematics. Springer, Singapore, 2017. ix+136 pp. ISBN: 978-981-10-6792-1; 978-981-10-6793-8.

%\bibitem{Rub} R. L. Rubinsztein, \textit{Topological quandles and invariants of links}, arXiv:0508536.

\end{thebibliography}
\end{document}